\newtheorem{theorem}{Theorem}
\newtheorem{lemma}[theorem]{Lemma}
\newtheorem{definition}[theorem]{Definition}
\newtheorem{remark}[theorem]{Remark}
  \renewenvironment{proof}[1][\proofname]{\par
    \pushQED{}%
    \normalfont \topsep6\p@\@plus6\p@\relax
    \trivlist
    \item\relax
          {\itshape
      #1\@addpunct{.}}\hspace\labelsep\ignorespaces
  }{%
    \popQED\endtrivlist\@endpefalse
  }
\title{Asymptotic expansions of high-frequency multiple scattering iterations for
sound hard scattering problems\thanks{Y. Boubendir’s work was supported by
the NSF through Grants DMS-1720014 and] DMS-2011843. F. Ecevit is supported
by the Scientific and Technological Research Council of Turkey through grant
T\"{U}B\.{I}TAK-1001-117F056.}}
\author{Yassine Boubendir\thanks{New Jersey Institute of Technology,
Department of Mathematical Sciences, University Heights, Newark NJ 07102,
USA. (boubendi@njit.edu)} \and
Fatih Ecevit\thanks{ Bo\u{g}azi\c{c}i University, Department of Mathematics,
34342 Bebek, Istanbul, Turkey. (fatih.ecevit@boun.edu.tr)}}
\begin{document}

\maketitle

\begin{abstract}
We consider the two-dimensional high-frequency plane wave scattering problem in the exterior of a finite collection of disjoint, compact, smooth, strictly convex obstacles with Neumann boundary conditions. Using integral equation
formulations, we determine the H\"{o}rmander classes and derive high-frequency asymptotic expansions of the total fields corresponding to multiple scattering iterations on the boundaries of the scattering obstacles. These asymptotic expansions are used to obtain sharp wavenumber dependent estimates on the derivatives of multiple scattering total fields which, in turn, allow for the optimal design and numerical analysis of Galerkin boundary element methods for the efficient (frequency independent) approximation of sound hard multiple scattering returns. Numerical experiments supporting the validity of these expansions are presented.
\end{abstract}



\section{Introduction}
\label{intro}

High-frequency scattering problems considered in this
manuscript have been effectively tackled using asymptotic approaches such as
the ray method (RM) \cite{Lewis65} and geometrical optics (GO) \cite{KlineKay65}.
These methods are based on ray ans\"{a}tze in the short-wavelength limit which
take the form of asymptotic series of amplitudes in inverse powers of the
wavenumber $k$ modulated by the oscillations in the incident field of radiation.
The resulting eikonal equation for the phase and the recursive system of transport
equations for the amplitudes are independent of frequency. The diffraction effects
ignored in RM and GO are taken into account in refined approaches such as the
uniform theory of diffraction \cite{McNamaraEtAl90} and geometrical theory
of diffraction \cite{Keller62,BabicBuldyrev91,BorovikovKinber94,Keller95,Tew00}
(see \cite{Runborg07} for a general review). While implementations based upon
asymptotic methods are frequency independent and the accuracy increases with
increasing frequency, they are not designed to converge for fixed frequencies.
On the other hand, in the case of low to moderate frequencies,
classical schemes based on direct discretizations of differential equations
such as the finite difference time domain method \cite{JovanovicSuli14,LeVeque07},
variational formulations including the method of moments \cite{Gibson15}, the finite
element method \cite{ErnGuermond21,Jin02,SunZhou17}, the finite volume
method \cite{LeVeque02}, and integral equation formulations
\cite{Nedelec01,ColtonKress13} including the accelerated ones using hierarchical
matrices \cite{BanjaiHackbusch05,Borm10,Seibel22}, the fast multipole method
\cite{Liu09} have been successfully used in the design of numerical algorithms
for wave propagation problems.

Methods that combine the advantages of classical schemes (error controllability)
and asymptotic methods (frequency independent degrees of freedom) and that thereby
provide simulation strategies applicable over the entire frequency spectrum
have been the content of increasingly active research in the last decades (see e.g.
\cite{ChungEtAl17,LamEtAl19,Nguyen15} and the references therein).
In this context, approaches that combine asymptotic expansions and integral
equation formulations have displayed the capability of delivering frequency
independent accuracies with the utilization of numbers of degrees of freedom
that needs to increase only mildly with increasing frequency. Moreover some
of them are frequency independent. These methods can be classified
as corresponding to single or multiple scattering problems.
For the case of single scattering, the problems considered correspond
to smooth strictly convex obstacles \cite{DominguezEtAl07,EcevitOzen17,Ecevit18,EcevitEruslu19,EcevitEtAl22},
convex polygons \cite{ArdenEtAl07,Chandler-WildeLangdon07,Chandler-WildeEtAl07,HewettEtAl13,GrothEtAl15,GrothEtAl18}, screens and apertures \cite{HewettEtAl15,GibbsEtAl20},
and half-planes \cite{LangdonChandler-Wilde06,Chandler-WildeEtAl04},
see also \cite{Chandler-WildeGraham09,Chandler-WildeEtAl12,Chandler-WildeLangdon15}
for general reviews and \cite{Galkowski19,GalkowskiEtAl19,SpenceEtAl15} for high-frequency properties
of integral operators. 
On the other hand, algorithms relating to
multiple scattering configurations are significantly more demanding
due to the nature of the problem. In this
connection, the boundary element methods proposed for a convex polygon
in addition to several small obstacles \cite{GibbsEtAl21} and a
non-convex polygon \cite{Chandler-WildeEtAl15} both demand an
$\mathcal{O}(\log k)$ increase in the number of DoF to maintain accuracy
with increasing frequency. While, on the one hand, the method in \cite{GibbsEtAl21} does
not require an iteration (e.g. through a utilization of a Neumann series)
and thus ray tracing, it is not applicable over the entire frequency spectrum
since small obstacles are required to have sizes on the order of the wavelength
to preserve the $\mathcal{O}(\log k)$ efficiency. The case of a non-convex polygon
considered in \cite{Chandler-WildeEtAl15} allows for only
finitely many reflections and thus a finite ray tracing procedure.

The case of several smooth compact strictly convex scatterers (with no restriction
on their sizes with respect to the wavelength) treated in this paper is a
multiple scattering problem resulting in infinitely many reflections and trapping
relations. The related scattering problem was initially studied in the context
of the Dirichlet boundary condition \cite{BrunoEtAl05,EcevitReitich09,AnandEtAl10,BoubendirEtAl17}.
In these papers the multiple scattering formulation is based on a Neumann series
decomposition applied to integral equation formulations (in two-
\cite{BrunoEtAl05,EcevitReitich09,BoubendirEtAl17} and three-dimensions \cite{AnandEtAl10}).
Here, in the case of the Neumann (sound hard) boundary condition, we reduce the multiple
scattering problem to a collection of single scattering partial differential
equations. Our main contributions are the derivation of the asymptotic expansions
of the total fields associated with multiple scattering iterations on the boundaries
of scattering obstacles, and of sharp wavenumber dependent estimates on the
derivatives of these densities. As we shall explain, these estimates are fundamental
in the design and numerical analysis of efficient (frequency independent) boundary
element methods for the iterated solutions of multiple scattering problems.
In addition, we present numerical tests validating the asymptotic expansions derived.

As we mentioned, the two-dimensional multiple scattering problem we consider
here was studied in \cite{EcevitReitich09} for the Dirichlet boundary
condition. The asymptotic expansions derived therein regarding
the normal derivative of the multiple scattering total fields were used in
\cite{EcevitEtAl20} to design efficient (frequency independent) Galerkin boundary
element methods for the Dirichlet multiple scattering problem.
Generally speaking, the approach in \cite{EcevitEtAl20} was an extension of the
single scattering algorithms \cite{EcevitOzen17,EcevitEruslu19} to multiple scattering
problems.

For a plane wave incidence $u^{\rm inc}(x,k) = e^{ik \, \alpha \cdot x}$ with direction
$\alpha$ impinging on a single smooth convex obstacle subject to the Dirichlet boundary
condition, the key elements in the design of Galerkin approximation spaces 
in \cite{EcevitOzen17,EcevitEruslu19} were phase extraction
\[
	\mu(x,k)
	= e^{ik \, \alpha \cdot x} \mu^{\rm slow}(x,k)
\]
(where $\mu$ is the unknown normal derivative of the total field) and the
Melrose-Taylor \cite{MelroseTaylor85,EcevitReitich09} asymptotic expansion
\begin{equation}
	\label{eq:MTDslow}
	\mu^{\rm slow}(x,k)
	\sim \sum_{p,q \ge 0} k^{\frac{2}{3}-\frac{2p}{3}-q} \, b_{p,q}(x) \, \Psi^{(p)}(k^{\frac{1}{3}} Z(x))
\end{equation}
(see \cite{LazerguiBoubendir17} for an alternative approach) used to derive
sharp wavenumber explicit estimates on the derivatives of the envelope
$\mu^{\rm slow}$. 
The resulting Galerkin boundary element methods
were shown to demand an increase of only $\mathcal{O}(k^{\epsilon})$, for any $\epsilon>0$,
in the number of DoF to maintain accuracy with increasing frequency. Moreover,
as shown in \cite{Ecevit18}, the methods in \cite{EcevitOzen17,EcevitEruslu19}
are frequency independent, i.e. $\mathcal{O}(1)$, provided a sufficient number of terms
in the asymptotic expansion is incorporated into integral equation formulations.

In the case of the Neumann boundary condition, we have recently developed
Galerkin boundary element methods \cite{EcevitEtAl20} for a single
smooth convex obstacle. Similar to the Dirichlet case \cite{EcevitOzen17,Ecevit18,EcevitEruslu19},
the approach is based on phase extraction
\[
	\eta(x,k)
	= e^{ik \, \alpha \cdot x} \eta^{\rm slow}(x,k)
\]
(where $\eta$ is the unknown total field) and the utilization of
Melrose-Taylor \cite{MelroseTaylor85,EcevitEtAl22} asymptotic expansion
\begin{equation}
	\label{eq:MTNslow}
	\eta^{\rm slow}(x,k)
	\sim
	\sum_{p,q,r \ge 0 \atop \ell \le -1} k^{-\frac{1+2p+3q+r+\ell}{3}+(\ell+1)_-} \, b_{p,q,r,\ell}(x) \,
	(\Psi^{r,\ell})^{(p)}(k^{\frac{1}{3}}Z(x))
\end{equation}
for the derivation of sharp wavenumber explicit estimates on the derivatives of
$\eta^{\rm slow}$.
To the best of our knowledge, the single scattering problem for the Neumann boundary
condition was explored only recently \cite{EcevitEtAl22} due to the complicated form of the asymptotic expansion \eqref{eq:MTNslow} when compared to its Dirichlet counterpart \eqref{eq:MTDslow}. For this reason, as we will see, the extension of the asymptotic expansion \eqref{eq:MTNslow} to multiple scattering problems presents more difficulties than
the extension of the Dirichlet expansion  considered in \cite{EcevitReitich09}. 

The asymptotic expansions we develop
here in this paper form the main element in the extension of the Galerkin boundary
element methods proposed in \cite{EcevitEtAl20} for the Neumann boundary condition
to the case of several smooth convex obstacles for the efficient (frequency independent)
approximation of multiple scattering iterations. Another application of these expansions
is the possibility of studying of the rate of convergence of multiple scattering iterations as done
in \cite{EcevitReitich09} for the Dirichlet case.

The paper is organized as follows. In Sect. \ref{sect:2}, we introduce the
sound hard scattering problem, discuss the single scattering Melrose-Taylor
asymptotic expansion of the total field, and present the multiple scattering
formulation. In Sect. \ref{sect:3}, we set the technical assumptions and
summarize the geometry of multiple scattering rays. In the same section,
we present two of the main results of the paper. They concern the asymptotic expansions
of the multiple scattering total fields on the boundaries of the scattering obstacles
(see Theorem \ref{Thm:main1}) and sharp wavenumber dependent estimates on
their derivatives (cf. Theorem \ref{thm:etamder}). Sect. \ref{sect:4} is reserved
for the statement and proof of the third main result relating to the asymptotic
expansions of the iterated scattered fields (see Theorem \ref{Thm:main2}).
Finally, numerical results validating the asymptotic expansions derived in 
Theorem \ref{Thm:main1} are presented in Sect. \ref{sect:5}.

\section{Sound hard multiple scattering problems}
\label{sect:2}

We consider the two dimensional sound hard multiple scattering problem in the exterior of the disjoint union
$K = \bigcup \{ K_\sigma : \sigma \in \mathcal{J} \}$ of finitely many smooth compact strictly convex obstacles
illuminated by a plane wave incidence $u^{\rm inc}(x,k) = e^{ik \, \alpha \cdot x}$ ($k>0$ and $|\alpha| = 1$).
The unknown scattered field $u$ satisfies the sound hard scattering problem \cite{ColtonKress92,Chandler-WildeEtAl12}
\begin{equation} \label{eq:Helmholtz}
	\left\{
		\begin{array}{l}
			(\Delta +k^2) u = 0
			\text{ in } \mathbb{R}^2 \backslash K,
			\\
			\partial_{\nu} u = -\partial_{\nu} u^{\rm inc}
			\text{ on } \partial K,
			\\
			\lim\limits_{r \to \infty} \sqrt{r} \big( \frac{\partial u}{\partial r} -iku \big) =0,
			\ r=|x|,
		\end{array}
	\right.
\end{equation}
where $\nu$ is the exterior unit normal to $\partial K$. The direct approach in integral equation formulations of the scattering
problem \eqref{eq:Helmholtz} is based on utilization of the Neumann-to-Dirichlet map through the double-layer representation
\begin{equation} \label{eq:usdl}
	u(x) = \int_{\partial K} \dfrac{\partial G(x,y)}{\partial \nu(y)} \, \eta(y) \, d s(y),
	\qquad
	x \in \mathbb{R}^2 \setminus K,
\end{equation}
where
\begin{equation} \label{eq:fundsol}
	G(x,y) = \dfrac{i}{4} \ H_{0}^{(1)}(k|x-y|)
\end{equation}
is the outgoing Green function for the Helmholtz equation.
Specifically, \eqref{eq:usdl} transforms the Neumann boundary value problem
\eqref{eq:Helmholtz} into the determination of Dirichlet data, namely the total field $\eta = u+ u^{\rm inc}$, on the boundary
$\partial K$.


In this work, we are interested in the high-frequency solution of the scattering problem \eqref{eq:Helmholtz}.
Indeed, for the case of a single smooth compact and strictly convex obstacle,
we have recently shown \cite{EcevitEtAl22} that frequency independent solutions
can be attained resorting to boundary integral equation reformulations of the sound hard
scattering problem. The design of Galerkin boundary element methods we
developed in \cite{EcevitEtAl22} are based on a detailed study of the H\"{o}rmander
class and asymptotic expansion of the total field on the boundary as $k \to \infty$. In detail,
referring to \cite{EcevitReitich09} and
the references therein for the definition of H\"{o}rmander classes $S^\mu_{\varrho,\delta}$, asymptotic expansions,
and rapid decrease in the sense of Schwartz, the asymptotic behavior of the envelope
\[
	\eta^{\rm slow}(x,k) = e^{-ik \, \alpha \cdot x} \, \eta(x,k)
\]
reads as follows; see \cite[Theorem 1]{EcevitEtAl22} and \cite[Theorem 9.36]{MelroseTaylor85}.

\begin{theorem}  \label{Thm:MT85}
If $K \subset \mathbb{R}^{2}$ is a single smooth, compact and strictly convex obstacle, then:
\begin{itemize}
\item[\emph{(i)}] On the illuminated region $\partial K^{\rm IL} = \{ x \in \partial K : \alpha \cdot \nu(x) < 0 \}$,
$\eta^{\rm slow}(x,k) \in S^{0}_{1,0}(\partial K^{\rm IL} \times (0,\infty))$ and has an asymptotic expansion
\begin{equation}
\label{eq:MT85aa}
	\eta^{\rm slow}(x,k)
	\sim \sum_{j \ge 0} k^{-j} a_{j}(x) 
\end{equation}
for some complex-valued smooth functions $a_{j}$.
\item[\emph{(ii)}] On a $(k$ independent$)$ neighborhood $\partial K^{\rm SB}_\epsilon$ of the shadow boundary
$\partial K^{\rm SB} = \{ x \in \partial K : \alpha \cdot \nu(x) = 0 \}$,
$\eta^{\rm slow}(x,k) \in S^0_{\frac{2}{3},\frac{1}{3}}(\partial K^{\rm SB}_\epsilon,(0,\infty))$, and has
an asymptotic expansion 
\begin{align}
	\eta^{\rm slow}(x,k)
	& \sim
	\sum_{p,q,r \ge 0 \atop \ell \le -1} a_{p,q,r,\ell}(x,k)
	\label{eq:MT85}
	\\
	& = \sum_{p,q,r \ge 0 \atop \ell \le -1} k^{-\frac{1+2p+3q+r+\ell}{3}+(\ell+1)_-} \,
	b_{p,q,r,\ell}(x) \, (\Psi^{r,\ell})^{(p)}(k^{\frac{1}{3}}Z(x))
	\nonumber
\end{align}
where $t_- = \min \{ t,0 \}$, $b_{p,q,r,\ell}$ and $\Psi^{r,\ell}$ are complex-valued smooth functions, 
$Z$ is a real-valued smooth function positive on the illuminated region, negative on the shadow region
$\partial K^{\rm SR} = \{ x \in \partial K : \alpha \cdot \nu(x) > 0 \}$,
and vanishes exactly to first order on the  shadow boundary, and the functions $\Psi^{r,\ell}$ are complex-valued smooth functions
with an asymptotic expansion~\cite[Lemma 9.9]{MelroseTaylor85}
\begin{equation*}
	\Psi^{r,\ell}(\tau) \sim \sum_{j=0}^{\infty} a_{r,\ell,j} \tau^{1+\ell-2r-3j}
	\qquad
	\text{as } \tau \to \infty,
\end{equation*}
and they rapidly decrease in the sense of Schwartz as $\tau \to -\infty$.
\item[\emph{(iii)}] On any compact subset of the shadow region, $\eta^{\rm slow}$ rapidly decreases in the sense of Schwartz
as $k \to \infty$.
\item[\emph{(iv)}] Moreover, $\eta^{\rm slow}(x,k) \in S^{0}_{\frac{2}{3},\frac{1}{3}}(\partial K \times (0,\infty))$ and the
asymptotic expansion \eqref{eq:MT85} is valid over the entire boundary $\partial K$.
\end{itemize}
\end{theorem}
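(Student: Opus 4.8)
The plan is to reduce the sound-hard problem \eqref{eq:Helmholtz} to a boundary integral equation, extract the incident phase, and then carry out a microlocal analysis of the resulting operator that distinguishes the qualitatively different behavior on the illuminated region, across the shadow boundary, and deep in the shadow. Concretely, applying the normal-derivative trace to the double-layer representation \eqref{eq:usdl} and imposing the Neumann condition yields a boundary integral equation for the Dirichlet data $\eta$; writing $\eta(x,k)=e^{ik\,\alpha\cdot x}\eta^{\rm slow}(x,k)$ then isolates the slowly varying envelope whose asymptotic structure we must determine. The four parts of the statement correspond to the four regimes of this operator.

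On the illuminated region $\partial K^{\rm IL}$ the relevant operator is non-glancing: the phase in the associated oscillatory integral has non-degenerate critical points, so a standard stationary-phase and symbol-calculus argument produces a classical polyhomogeneous expansion in powers of $k^{-1}$. This delivers the membership $\eta^{\rm slow}\in S^{0}_{1,0}(\partial K^{\rm IL}\times(0,\infty))$ and the expansion \eqref{eq:MT85aa} directly, with the coefficients $a_{j}$ recovered recursively from the geometrical-optics transport equations.

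The substantive part concerns a neighborhood of the shadow boundary $\partial K^{\rm SB}$, where $\alpha\cdot\nu$ vanishes and the stationary points coalesce, producing a glancing (Fock) degeneracy. Here I would follow the Melrose--Taylor parametrix construction: rescale by the natural boundary-layer variable $k^{1/3}Z(x)$, with $Z$ vanishing to first order on $\partial K^{\rm SB}$, and match the rescaled solution to the model Airy-type profiles $\Psi^{r,\ell}$. The $k^{1/3}$ scaling reflects the $\mathcal{O}(k^{-1/3})$ angular width of the transition zone. Since each $\Psi^{r,\ell}$ is Schwartz as $\tau\to-\infty$, the rescaled solution decays rapidly into the shadow, giving part (iii); matching across the transition yields the class $S^{0}_{2/3,1/3}$ and the expansion \eqref{eq:MT85}, and patching the illuminated and transition parametrices with a partition of unity produces the global statement (iv).

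The main obstacle, and the reason the Neumann case is harder than its Dirichlet counterpart \eqref{eq:MTDslow}, is that the normal-derivative (hypersingular) structure of the sound-hard operator produces a more singular symbol at the glancing set, so a single Airy profile no longer suffices. One is forced to introduce the full family $\Psi^{r,\ell}$ with $\ell\le-1$, each carrying the prescribed homogeneity $\tau^{1+\ell-2r-3j}$, and to track the extra indices $r,\ell$ through the recursion. The crux is to verify that the resulting multi-indexed series still defines a symbol in $S^{0}_{2/3,1/3}$, with the exponents $-\frac{1+2p+3q+r+\ell}{3}+(\ell+1)_-$ controlled uniformly in $x$ and $k$; this is precisely the content of \cite[Theorem 1]{EcevitEtAl22} together with \cite[Theorem 9.36]{MelroseTaylor85}, on which the present statement relies.
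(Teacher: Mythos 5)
The paper does not prove Theorem~\ref{Thm:MT85} at all: it is imported as a known background result, the statement itself pointing to \cite[Theorem 1]{EcevitEtAl22} and \cite[Theorem 9.36]{MelroseTaylor85} --- precisely the two sources to which your sketch ultimately defers. Your outline of the Melrose--Taylor machinery (stationary phase in the illuminated region, the $k^{1/3}Z$ boundary-layer rescaling with the Airy-type profiles $\Psi^{r,\ell}$ near the shadow boundary, Schwartz decay in the shadow, and partition-of-unity patching) is a fair description of what those references establish, so your treatment coincides in substance with the paper's, the only cosmetic difference being your opening reduction via the hypersingular (normal-derivative) trace rather than the Dirichlet-trace equation \eqref{eq:inteqall} actually used in the paper.
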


The main goal of this paper is the extension of Theorem~\ref{Thm:MT85} to multiple scattering problems.
To this end, we first observe that, when $|\mathcal{J}| \ge 2$, the scattered field $u$ can be uniquely
decomposed into
\begin{equation} \label{eq:decomposeu}
	u = \sum_{\sigma \in \mathcal{J}} u_\sigma
	\quad
	\text{in }
	\mathbb{R} \setminus K 
\end{equation}
where $\{ u_\sigma : \sigma \in \mathcal{J} \}$ solve the coupled system of sound hard scattering problems
\begin{equation}
\label{eq:Helmholtzsigma}
	\left\{
		\begin{array}{l}
			(\Delta +k^2) u_\sigma = 0
			\text{ in } \mathbb{R}^2 \setminus K_\sigma,
			\\
			\partial_{\nu} u_\sigma = - \partial_{\nu} (u^{\rm inc} + \sum_{\tau \in \mathcal{J} \setminus \{ \sigma \}} u_\tau)
			\text{ on } \partial K_\sigma,
			\\
			\lim\limits_{r \to \infty} \sqrt{r} \big( \frac{\partial u_\sigma}{\partial r} -iku_\sigma \big) = 0.
		\end{array}
	\right.
\end{equation}
Moreover, we formally have
\[
	u_\sigma = \sum_{m=0}^\infty u_\sigma^m
\]
where
\begin{equation}
\label{eq:Helmholtzsigmadecomp0}
	\left\{
		\begin{array}{l}
			(\Delta +k^2) u_\sigma^0 = 0
			\text{ in } \mathbb{R}^2 \setminus K_\sigma,
			\\
			\partial_{\nu} u_\sigma^0 = - \partial_{\nu} u^{\rm inc}
			\text{ on } \partial K_\sigma,
			\\
			\lim\limits_{r \to \infty} \sqrt{r} \big( \frac{\partial u_\sigma^0}{\partial r} -iku_\sigma^0 \big) =0,
		\end{array}
	\right.
\end{equation}
and for $m \ge 1$
\begin{equation}
\label{eq:Helmholtzsigmadecompm}
	\left\{
		\begin{array}{l}
			(\Delta +k^2) u_\sigma^m = 0
			\text{ in } \mathbb{R}^2 \setminus K_\sigma,
			\\
			\partial_{\nu} u_\sigma^m = - \sum_{\tau \in \mathcal{J} \setminus \{ \sigma \}} \partial_\nu u_\tau^{m-1}
			\text{ on } \partial K_\sigma,
			\\
			\lim\limits_{r \to \infty} \sqrt{r} \big( \frac{\partial u_\sigma^m}{\partial r} -iku_\sigma^m \big) =0.
		\end{array}
	\right.
\end{equation}
It follows from \eqref{eq:Helmholtzsigmadecomp0}-\eqref{eq:Helmholtzsigmadecompm} that, in fact, we have
\begin{equation} \label{eq:decomposeusigmam}
	u_\sigma^m
	= \sum_{\tau_0,\ldots,\tau_{m} \in \mathcal{J} \atop \tau_j \ne \tau_{j-1}, \, \tau_m = \sigma}
	u_{\tau_0,\ldots,\tau_m}
\end{equation}
where $u_{\tau_0},\ldots,u_{\tau_0,\ldots,\tau_m}$ iteratively solve the sound hard scattering problems
\begin{equation}
\label{eq:Helmholtzsigmadecomp0new}
	\left\{
		\begin{array}{l}
			(\Delta +k^2) u_{\tau_0} = 0
			\text{ in } \mathbb{R}^2 \setminus K_{\tau_0},
			\\
			\partial_{\nu} u_{\tau_0} = - \partial_{\nu} u^{\rm inc}
			\text{ on } \partial K_{\tau_0},
			\\
			\lim\limits_{r \to \infty} \sqrt{r} \big( \frac{\partial u_{\tau_0}}{\partial r} -iku_{\tau_0} \big) =0,
		\end{array}
	\right.
\end{equation}
and for $j = 1,\ldots,m$
\begin{equation}
\label{eq:Helmholtzsigmadecompmnew}
	\left\{
		\begin{array}{l}
			(\Delta +k^2) u_{\tau_0,\ldots,\tau_j} = 0
			\text{ in } \mathbb{R}^2 \setminus K_{\tau_j},
			\\
			\partial_{\nu} u_{\tau_0,\ldots,\tau_j} = - \partial_\nu u_{\tau_0,\ldots,\tau_{j-1}}
			\text{ on } \partial K_{\tau_j},
			\\
			\lim\limits_{r \to \infty} \sqrt{r} \big( \frac{\partial u_{\tau_0,\ldots,\tau_j}}{\partial r} -iku_{\tau_0,\ldots,\tau_j} \big) =0.
		\end{array}
	\right.
\end{equation}
Using \eqref{eq:decomposeusigmam} in \eqref{eq:decomposeu}, we therefore see that it is sufficient to consider an arbitrary
sequence $\{ K_m \}_{m \ge 0}$ of obstacles in $\{ K_\sigma : \sigma \in \mathcal{J} \}$ satisfying $K_{m+1} \ne K_m$ and
discuss the iterative solution of the sound hard scattering problems
\begin{equation}
\label{eq:Helmholtzsigmadecomp0newer}
	\left\{
		\begin{array}{l}
			(\Delta +k^2) u_0 = 0
			\text{ in } \mathbb{R}^2 \setminus K_0,
			\\
			\partial_{\nu} u_0 = - \partial_{\nu} u^{\rm inc}
			\text{ on } \partial K_0,
			\\
			\lim\limits_{r \to \infty} \sqrt{r} \big( \frac{\partial u_0}{\partial r} -iku_0 \big) =0,
		\end{array}
	\right.
\end{equation}
and for $m \ge 1$
\begin{equation}
\label{eq:Helmholtzsigmadecompmnewer}
	\left\{
		\begin{array}{l}
			(\Delta +k^2) u_m = 0
			\text{ in } \mathbb{R}^2 \setminus K_m,
			\\
			\partial_{\nu} u_m = - \partial_\nu u_{m-1}
			\text{ on } \partial K_m,
			\\
			\lim\limits_{r \to \infty} \sqrt{r} \big( \frac{\partial u_m}{\partial r} -iku_m \big) =0.
		\end{array}
	\right.
\end{equation}
In light of \eqref{eq:usdl}, we see that the solutions $u_m$ of the sound hard multiple scattering problems
\eqref{eq:Helmholtzsigmadecomp0newer}-\eqref{eq:Helmholtzsigmadecompmnewer} can be recovered through
the double layer representations
\begin{equation} \label{eq:usjdl}
	u_m(x,k) = \int_{\partial K_m} \dfrac{\partial G(x,y)}{\partial \nu(y)} \, \eta_m(y,k) \, d s(y),
	\qquad
	x \in \mathbb{R}^2 \setminus K_m,
	\qquad
	m \ge 0,
\end{equation}
provided the \emph{multiple scattering total fields}
\begin{equation} \label{eq:multscattotalfields}
	\eta_m = 
	\left\{
		\begin{array}{ll}
			u_0 + u^{\rm inc},
			& m = 0,
			\\
			u_m+u_{m-1},
			& m \ge 1,
		\end{array}
	\right.
	\qquad
	\text{on } \partial K_m,
\end{equation}
are predetermined. As in the case of single scattering problems, we observe that these predeterminations can be based on frequency independent implementations (utilizing
e.g. Galerkin boundary element methods \cite{EcevitEtAl22}) provided the asymptotic
expansions of the multiple scattering total fields $\eta_m$ are properly incorporated into
the solution strategy. In the next section, under appropriate assumptions, we derive these
asymptotic expansions. As is apparent from the integral equation \cite{ColtonKress13}
\begin{equation} \label{eq:inteqall}
	\eta_m(x,k) - 2 \int_{\partial K_m} \dfrac{\partial G(x,y)}{\partial \nu(y)} \, \eta_m(y,k) \, d s(y)
	= 2
	\left\{
		\begin{array}{ll}
			u^{\rm inc}(x,k),
			& \ m = 0,
			\\
			u_{m-1}(x,k),
			& \ m \ge 1,	
		\end{array}
	\right.
	\quad
	x \in \partial K_m,
\end{equation}
these derivations demand a through understanding of the asymptotic behavior of the scattered fields $u_m$.

\section{Geometry of multiple scattering rays, phase extraction, and asymptotic expansions}
\label{sect:3}
 
For the developments that follow, we assume that

\begin{itemize}
\item[\bf A] The sequence $\{ K_m \}_{m \ge 0} \subset \{ K_\sigma : \sigma \in \mathcal{J} \}$ satisfies $K_{m+1} \ne K_m$
($m \ge 0$) along with the \emph{no-occlusion condition}
\begin{equation}
\label{eq:NoOcclusion}
	\{ x + t \alpha : x \in K_0, t \ge 0 \} \cap K_{1} = \varnothing,
\end{equation}
and the \emph{visibility condition}
\begin{equation}
\label{eq:VisibleVisible}
	K_{m+1} \cap \overline{co}(K_m \cup K_{m+2}) 
	= \varnothing,
	\qquad
	m \ge 0,
\end{equation}
where $\overline{co}$ denotes the closed convex hull.
\item[\bf B] Theorem~\ref{Thm:MT85} holds for incident fields $w$ impinging on
smooth compact strictly convex obstacles $K$ that satisfy the Helmholtz equation and admit a factorization 
\begin{equation}
\label{eq:MT86inc}
	w(x,k) = e^{ik \psi(x)} \, w^{\rm slow}(x,k)
\end{equation}
on an open set $O$ containing $K$ where $\psi$ is a smooth phase function having convex wave-fronts
$\{ x : \psi(x) = t \}$ relative to the normal $\nabla \psi$, and $w^{\rm slow}$ is an envelope which belongs
to the H\"{o}rmander class $S^{0}_{1,0}(O \times (0,\infty))$ and admits a classical asymptotic expansion
\begin{equation}
\label{eq:elclassico}
	w^{\rm slow}(x,k) \sim \sum_{p=0}^{\infty} k^{-p} A_{p}(x)
\end{equation}
on the open set $O$.

\end{itemize}
\begin{remark}
Assumption B will be addressed for both the Dirichlet and Neumann conditions in a forthcoming paper. Indeed, as was observed in the setting of the Dirichlet condition \cite{EcevitReitich09}, the stationary points of the \emph{combined phase} (arising in integral equation formulations) display the same geometrical characteristics with those corresponding to a plane wave incidence (see \cite{EcevitReitich09,MelroseTaylor85} for details). It is therefore conceivable that the analysis in \cite{MelroseTaylor85} can be modified to provide a rigorous proof. 
\end{remark}

As was shown in \cite{EcevitReitich09}, the no-occlusion and visibility conditions guarantee
that the multiple scattering phases
\begin{equation} 
\label{eq:multphase}
	\varphi_m(x)= 
	\left\{
		\begin{array}{ll}
			\alpha \cdot x,
			&
			\quad m = 0,
			\\
			\alpha \cdot \mathcal{X}^{m}_{0}(x)
			+ \sum\limits_{j=0}^{m-1} |\mathcal{X}^{m}_{j+1}(x)-\mathcal{X}^{m}_{j}(x)|,
			& \quad m \ge 1 \, ,
		\end{array}
	\right.
\end{equation}
are well defined, for all $m \ge 1$ and all $x \in \partial K_m$, by the conditions
\begin{equation} \label{eq:points}
	\left\{
		\begin{array}{ll}
			\text{(a)} \ (\mathcal{X}^{m}_0(x), \ldots, \mathcal{X}^{m}_{m}(x)) \in \partial K_0 \times \cdots \times \partial K_m,
			\vspace{0.2cm}
			\\
			\text{and}
			\vspace{0.2cm}
			\\
			\text{(b)} \ \alpha \cdot \nu(\mathcal{X}^{m}_{0}(x)) < 0,
			\vspace{0.2cm}
			\\
			\text{(c)} \
			\dfrac{\mathcal{X}^{m}_{1}(x)-\mathcal{X}^{m}_{0}(x)}
			{|\mathcal{X}^{m}_{1}(x)-\mathcal{X}^{m}_{0}(x)|}
			= \alpha - 2 \alpha \cdot \nu(\mathcal{X}^{m}_{0}(x)) \, \nu(\mathcal{X}^{m}_{0}(x)),
			\vspace{0.2cm}
			\\
			\text{and, for } 1 < j < m, 
			\vspace{0.2cm}
			\\
			\text{(d)} \
			(\mathcal{X}^{m}_{j+1}(x)-\mathcal{X}^{m}_{j}(x))
			\cdot \nu(\mathcal{X}^{m}_{j}(x)) > 0,
			\vspace{0.2cm}
			\\
			\text{(e)} \
			\dfrac{\mathcal{X}^{m}_{j+1}(x)-\mathcal{X}^{m}_{j}(x)}
			{|\mathcal{X}^{m}_{j+1}(x)-\mathcal{X}^{m}_{j}(x)|}
			= \dfrac{\mathcal{X}^{m}_{j}(x)-\mathcal{X}^{m}_{j-1}(x)}
			{|\mathcal{X}^{m}_{j}(x)-\mathcal{X}^{m}_{j-1}(x)|}
			\\
			\hspace{3.6cm}
			- 2 \dfrac{\mathcal{X}^{m}_{j}(x)-\mathcal{X}^{m}_{j-1}(x)}
			{|\mathcal{X}^{m}_{j}(x)-\mathcal{X}^{m}_{j-1}(x)|}
			\cdot \nu(\mathcal{X}^{m}_{j}(x)) \, \nu(\mathcal{X}^{m}_{j}(x)),
			\vspace{0.2cm}
			\\
			\text{and}
			\vspace{0.2cm}
			\\
			\text{(f)} \
			\mathcal{X}^{m}_{m}(x) = x.
		\end{array}
	\right.
\end{equation}
Geometrically speaking \eqref{eq:points} means the \emph{broken ray
$(\mathcal{X}^{m}_0(x), \ldots, \mathcal{X}^{m}_{m}(x))$ corresponding to any given
point $x \in \partial K_m$} is determined, for $0 \le j < m-1$, by the \emph{law of reflection}
subject to the condition that the open line segment
$\{ t \mathcal{X}^{m}_j(x) + (1-t) \mathcal{X}^{m}_{j+1}(x) : 0 < t <1 \}$ has no point in
common with $K_j \cup K_{j+1}$.
Consequently, \eqref{eq:multphase} allows for the extraction of the phases of the total fields $\eta_m$ \eqref{eq:multscattotalfields}
in the form
\begin{equation} \label{eq:extractphases}
	\eta_m(x,k) = e^{ik \, \varphi_m(x)} \eta^{\rm slow}_m(x,k),
	\quad
	x \in \partial K_m,
\end{equation}
and the broken rays uniquely partition the boundary $\partial K_m$ into the
\emph{illuminated regions}
\begin{equation}
\label{eq:illum_m_def}
	\partial K^{\rm IL}_{m}
	= \left\{
		\begin{array}{ll}
			\left\{
				x \in \partial K_{0} :
				\alpha \cdot \nu(x) < 0
			\right\},
			& \quad m = 0,
			\\
			\\
			\left\{
				x \in \partial K_{m} :
				(\mathcal{X}^{m}_{m}(x)-\mathcal{X}^{m}_{m-1}(x))
				\cdot \nu(x) < 0
			\right\},
			& \quad m \ge 1,
		\end{array}
	\right.
\end{equation}
\emph{shadow regions}
\begin{equation}
\label{eq:shbound_m_def} 
	\partial K^{\rm SR}_{m}
	= \left\{
		\begin{array}{ll}
			\left\{
				x \in \partial K_{0} :
				\alpha \cdot \nu(x) > 0
			\right\},
			& \quad m = 0,
			\\
			\\
			\left\{
				x \in \partial K_{m} :
				(\mathcal{X}^{m}_{m}(x)-\mathcal{X}^{m}_{m-1}(x))
				\cdot \nu(x) > 0
			\right\}
			& \quad m \ge 1,
		\end{array}
	\right.
\end{equation}
and \emph{shadow boundaries}
\begin{equation}
\label{eq:shreg_m_def}
	\partial K^{\rm SB}_{m}
	= \left\{
		\begin{array}{ll}
			\left\{
				x \in \partial K_{0} :
				\alpha \cdot \nu(x) = 0
			\right\},
			& \quad m = 0,
			\\
			\\
			\left\{
				x \in \partial K_{m} :
				(\mathcal{X}^{m}_{m}(x)-\mathcal{X}^{m}_{m-1}(x))
				\cdot \nu(x) = 0
			\right\}
			& \quad m \ge 1.
		\end{array}
	\right.
\end{equation}
As is apparent from \eqref{eq:inteqall}, the determination of H\"{o}rmander classes and asymptotic expansions of the envelopes
$\eta^{\rm slow}_m$ \eqref{eq:extractphases} further demand a detailed understanding of the asymptotic behavior of the scattered
fields $u_m$. Within this framework, as was shown in \cite{EcevitReitich09}, the phase functions $\varphi_{m}$ \eqref{eq:multphase} admit
smooth and convex wave-fronts
\begin{equation}
\label{eq:mywavefront}
	\mathcal{P}_{m}(t,\mathcal{N}_{m})
	= \left\{
		y + \left( t - \varphi_{m}(y) \right) \alpha^{\rm ref}_{m}(y) \, : \,
		y \in \mathcal{N}_{m}
	\right\}
\end{equation}
for any open connected sub-manifold $\mathcal{N}_{m} \subset \partial K_{m}^{\rm IL}$ ($m \ge 0$) (for all $t$
greater than the minimum of $\varphi_{m}$ on $\mathcal{N}_{m}$) with respect to the normal
\begin{equation}
\label{eq:normalfield}
	\alpha^{\rm ref}_{m}(y)
	= \left\{
		\begin{array}{ll}
			\alpha - 2 \alpha \cdot \nu(y) \, \nu(y) \, ,
			& \quad m = 0,
			\\
			\\
			\dfrac{\mathcal{X}^{m}_{m}(y)-\mathcal{X}^{m}_{m-1}(y)}
			{|\mathcal{X}^{m}_{m}(y)-\mathcal{X}^{m}_{m-1}(y)|}
			- 2 \, \dfrac{\mathcal{X}^{m}_{m}(y)-\mathcal{X}^{m}_{m-1}(y)}
			{|\mathcal{X}^{m}_{m}(y)-\mathcal{X}^{m}_{m-1}(y)|}
			\cdot \nu(y) \, \nu(y) \, ,
			& \quad m \ge 1;
		\end{array}
	\right.
\end{equation}
the normal field $\alpha^{\rm ref}_{m}(y)$ is the direction of the reflected ray resulting from the incidence on $\partial K_{m}$ with direction
\begin{equation*}
	\alpha^{\rm inc}_{m}(y) =
	\left\{
		\begin{array}{ll}
			\alpha,
			& \ m = 0,
			\vspace{0.2cm}
			\\
			\dfrac{\mathcal{X}^{m}_{m}(y)-\mathcal{X}^{m}_{m-1}(y)}
			{|\mathcal{X}^{m}_{m}(y)-\mathcal{X}^{m}_{m-1}(y)|},
			& \ m \ge 1.
		\end{array}
	\right.
\end{equation*}
It follows, upon noting that
\begin{equation}
\label{eq:raym}
	\operatorname{ray}_{m}(y) = \left\{ y + t \alpha^{\rm ref}_{m}(y) : t > 0 \right\}
\end{equation}
is the half-ray generated by the reflection of the ray incident on the boundary $\partial K_{m}$ at $y$ with direction $\alpha^{\rm inc}_{m}(y)$,
for any $x$ in the \emph{open illuminated region}
\begin{equation}
\label{eq:OILm}
	O^{\rm IL}_{m} = \bigcup
	\left\{ \operatorname{ray}_{m}(y) \, : \, y \in \partial K^{\rm IL}_{m} \right\}
\end{equation}
there exists a unique $y = y(x) \in \partial K^{\rm IL}_{m}$ so that
$x \in \operatorname{ray}_{m}(y)$. The \emph{reflected phase function}
\begin{equation}
\label{eq:mainphase}
	\psi_{m}(x) = |x - y(x)| + \varphi_{m}(y(x)),
	\quad
	x \in O^{\rm IL}_{m},
\end{equation}
then has the smooth and convex wave-fronts \eqref{eq:mywavefront}. This, in turn, allows us to extract the phase of the scattered field $u_m$ \eqref{eq:usjdl} in the form
\begin{equation}
\label{eq:umslowdef}
	u_{m}(x,k) = e^{ik \psi_{m}(x)} \, u^{\rm slow}_{m}(x,k) \, ,
	\quad
	x \in O^{\rm IL}_{m}.
\end{equation}
A final note is that the visibility and no-occlusion conditions imply that the obstacle $K_{m+1}$ is contained in the open set $O^{\rm IL}_{m}$ for all $m\ge 0$ \cite{EcevitReitich09}.

Under the assumptions A and B, the asymptotic behavior of the envelopes $\eta_m^{\rm slow}$ is as follows.

\begin{theorem} \label{Thm:main1}
For all $m \in \mathbb{Z}_+ = \mathbb{N} \cup \{ 0 \}$, we have:
\begin{itemize}
\item[\emph{(i)}] $\eta^{\rm slow}_m(x,k) \in S^{0}_{1,0}(\partial K^{\rm IL}_m \times (0,\infty))$ and has an asymptotic expansion
\begin{equation}
\label{eq:MT85a}
	\eta^{\rm slow}_m(x,k)
	\sim \sum_{j \ge 0} k^{-j} a_{m,j}(x) 
\end{equation}
for some complex-valued smooth functions $a_{m,j}$.
\item[\emph{(ii)}] On a $(k$ independent$)$ neighborhood $\partial K^{\rm SB}_{m,\epsilon}$ of 
$\partial K^{\rm SB}_m$, $\eta^{\rm slow}(x,k)$ belongs to $S^0_{\frac{2}{3},\frac{1}{3}}(\partial K^{\rm SB}_{m,\epsilon} \times (0,\infty))$ and has
an asymptotic expansion 
\begin{align}
	\eta^{\rm slow}_m(x,k)
	& \sim
	\sum_{p,q,r \ge 0 \atop \ell \le -1} a_{m,p,q,r,\ell}(x,k)
	\label{eq:MT85mult}
	\\ 
	& = \sum_{p,q,r \ge 0 \atop \ell \le -1} k^{-\frac{1+2p+3q+r+\ell}{3}+(\ell+1)_-} \,
	b_{m,p,q,r,\ell}(x) \, (\Psi^{r,\ell})^{(p)}(k^{\frac{1}{3}}Z_m(x))
	\nonumber
\end{align}
where $b_{m,p,q,r,\ell}$ and $\Psi^{r,\ell}$ are complex-valued smooth functions, 
$Z_m$ is a real-valued smooth function positive on $\partial K^{\rm IL}_m$, negative on $\partial K^{\rm SR}_m$,
and vanishes exactly to first order on $\partial K^{\rm SB}_m$, and $\Psi^{r,\ell}$ are complex-valued smooth functions
with an asymptotic expansion
\begin{equation} \label{eq:Psirlasymp}
	\Psi^{r,\ell}(\tau) \sim \sum_{j=0}^{\infty} a_{r,\ell,j} \tau^{1+\ell-2r-3j}
	\qquad
	\text{as } \tau \to \infty,
\end{equation}
and they rapidly decrease in the sense of Schwartz as $\tau \to -\infty$.
\item[\emph{(iii)}] On any compact subset of $\partial K^{\rm SR}_m$, $\eta^{\rm slow}_m$ rapidly decreases in the sense of Schwartz
as $k \to \infty$.
\item[\emph{(iv)}] Moreover, $\eta^{\rm slow}_m(x,k) \in S^{0}_{\frac{2}{3},\frac{1}{3}}(\partial K_m \times (0,\infty))$ and the
asymptotic expansion \eqref{eq:MT85mult} is valid over the entire boundary $\partial K_m$.
\end{itemize}
\end{theorem}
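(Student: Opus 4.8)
The plan is to argue by induction on the iteration number $m$, using Assumption B to transfer the single-obstacle result of Theorem~\ref{Thm:MT85} from one scatterer to the next. For the base case $m = 0$, the incident field is the plane wave $u^{\rm inc} = e^{ik\,\alpha \cdot x}$, the phase is $\varphi_0 = \alpha \cdot x$, and $\eta_0 = u_0 + u^{\rm inc}$ on $\partial K_0$; thus all four assertions (i)--(iv) follow verbatim from Theorem~\ref{Thm:MT85}. For the inductive step, I would assume (i)--(iv) hold for $\eta^{\rm slow}_{m-1}$ on $\partial K_{m-1}$ and regard the scattered field $u_{m-1}$ as the incident field impinging on $K_m$, in accordance with the boundary condition in \eqref{eq:Helmholtzsigmadecompmnewer} and the definition \eqref{eq:multscattotalfields} of $\eta_m = u_m + u_{m-1}$. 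The entire content of the step is then to verify that $u_{m-1}$ satisfies the hypotheses of Assumption B on an open neighborhood of $K_m$; once this is done, Assumption B delivers (i)--(iv) for $\eta^{\rm slow}_m$ directly, with the phase $\varphi_m$ of \eqref{eq:multphase} and the corresponding defining function $Z_m$ of the shadow boundary \eqref{eq:shreg_m_def}.

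The heart of the matter is therefore the propagation of the asymptotics of $u_{m-1}$ from $\partial K_{m-1}$ into a neighborhood of $K_m$. I would recover $u_{m-1}$ from its boundary total field through the double-layer representation \eqref{eq:usjdl}, so that for $x$ near $K_m$
\begin{equation*}
	u_{m-1}(x,k) = \int_{\partial K_{m-1}} \frac{\partial G(x,y)}{\partial \nu(y)} \, e^{ik\,\varphi_{m-1}(y)} \, \eta^{\rm slow}_{m-1}(y,k) \, ds(y).
\end{equation*}
Inserting the large-argument expansion of the Hankel function in \eqref{eq:fundsol} produces an oscillatory integral with combined phase $\varphi_{m-1}(y) + |x-y|$. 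By the no-occlusion and visibility conditions of Assumption A, the obstacle $K_m$ lies in the open illuminated region $O^{\rm IL}_{m-1}$ of \eqref{eq:OILm}, bounded away from $\partial K_{m-1}$; consequently, for every $x$ in a suitable neighborhood of $K_m$ the combined phase possesses a unique, nondegenerate stationary point $y = y(x)$, the specular reflection point, which lies in a fixed compact subset of the \emph{interior} of $\partial K^{\rm IL}_{m-1}$ and hence stays uniformly away from the shadow boundary $\partial K^{\rm SB}_{m-1}$, with critical value $\varphi_{m-1}(y(x)) + |x - y(x)| = \psi_{m-1}(x)$ as in \eqref{eq:mainphase}.

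With this geometric picture in hand, I would split the integral into a piece localized near $y(x)$ and a remainder. On the localized piece the density is governed by the \emph{classical} expansion (i) of the inductive hypothesis, valid on $\partial K^{\rm IL}_{m-1}$, so the method of stationary phase yields $u_{m-1}(x,k) = e^{ik\,\psi_{m-1}(x)} u^{\rm slow}_{m-1}(x,k)$ with an envelope $u^{\rm slow}_{m-1} \in S^{0}_{1,0}(O \times (0,\infty))$ admitting a classical asymptotic expansion in inverse powers of $k$ of the form \eqref{eq:elclassico}; crucially, the exotic Airy-type contributions encoded in (ii) never enter because the stationary point avoids the shadow boundary. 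On the remainder the phase is non-stationary, so repeated integration by parts, combined with the Schwartz decay (iii) of $\eta^{\rm slow}_{m-1}$ over the shadow region, shows this part to be rapidly decreasing and hence negligible. Since the wave-fronts \eqref{eq:mywavefront} of $\psi_{m-1}$ are smooth and convex relative to $\nabla\psi_{m-1}$, the field $u_{m-1}$ meets the full requirement \eqref{eq:MT86inc}--\eqref{eq:elclassico} of Assumption B, and the induction closes.

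The principal obstacle is precisely this propagation step, and within it the delicate point is the uniformity of the stationary-phase analysis over the whole neighborhood of $K_m$ together with the control of the boundary portion where the density carries the fractional-power, Airy-type structure of (ii). One must check that the stationary point remains nondegenerate and uniformly interior to $\partial K^{\rm IL}_{m-1}$ for all $x$ near $K_m$ — a fact ultimately guaranteed by Assumption A — and that the transition region around $\partial K^{\rm SB}_{m-1}$ contributes only lower-order terms despite the $S^{0}_{\frac{2}{3},\frac{1}{3}}$ symbol class there. Relative to the Dirichlet treatment of \cite{EcevitReitich09}, the additional bookkeeping stems from the more intricate expansion \eqref{eq:MT85mult}, with its extra index $\ell$ and negative powers of $k$; however, this complexity is confined to the application of Assumption B near the new shadow boundary $\partial K^{\rm SB}_m$ and does not affect the propagation itself, which only ever samples the classical part (i) of the field.
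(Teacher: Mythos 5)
Your proposal follows essentially the same route as the paper: induction on $m$, with the base case given by Theorem~\ref{Thm:MT85} and the inductive step closed by verifying the hypotheses of Assumption~B for the field $u_{m-1}$ incident on $K_m$, using that $K_m \subset O^{\rm IL}_{m-1}$ and that $\psi_{m-1}$ has convex wave-fronts. The propagation step you sketch---stationary phase at the specular point using the classical illuminated-region expansion, non-stationary-phase integration by parts in the transition region, and Schwartz decay in the shadow region---is precisely the technical content the paper isolates as Theorem~\ref{Thm:main2} and proves in Section~\ref{sect:4}.
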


\begin{proof}
The proof is by induction on $m$.
For $m=0$, the result follows from Theorem~\ref{Thm:MT85}.
For $m \ge 1$, $\eta_{m}$ is the total field generated by the scattered field $u_{m-1}$ impinging on $K_m$ as an incident field
(see \eqref{eq:Helmholtzsigmadecompmnewer} and \eqref{eq:inteqall}). Since $K_m \subset O^{\rm IL}_{m-1}$,
$u_{m-1}(x,k) = e^{ik \psi_{m-1}(x)} \, u^{\rm slow}_{m-1}(x)$ and the phase $\psi_{m-1}$ has convex wave-fronts $\{ \psi_{m-1}(x) = t \}$
in the open set $O^{\rm IL}_{m-1}$, by assumption~A, it is sufficient to prove that
$u^{\rm slow}_{m}(x,k) \in S^{0}_{1,0}(O^{\rm IL}_m \times \left( 0,\infty \right))$ and has a classical asymptotic expansion of the form (\ref{eq:elclassico}).
This technical result is the content of Theorem~\ref{Thm:main2} in the next section.
\end{proof}

As we mentioned in the introduction, 
construction of frequency independent algorithms for multiple scattering problems further
demand the derivation of sharp wavenumber explicit estimates on the derivatives of the multiple
scattering iterations $\eta^{\rm slow}_m$. As in the Dirichlet multiple scattering problem
\cite{EcevitEtAl20}, this is naturally based on the H\"{o}rmander classes
and sharp wavenumber explicit estimates on the derivatives of the envelopes $a_{m,p,q,r,\ell}$
in the Neumann asymptotic expansion \eqref{eq:MT85mult}. In connection therewith, assuming a counterclockwise oriented
$P_m$-periodic regular parameterization $\gamma_m(s)$ of the boundary $\partial K_m$ and
writing $a_{m,p,q,r,\ell}(s,k)$ for $a_{m,p,q,r,\ell}(\gamma_m(s),k)$, we have:

\begin{theorem}
For all $m \in \mathbb{Z}_+$ and $(p,q,r,\ell) \in \mathbb{Z}_+^3 \times (-\mathbb{N})$, $a_{m,p,q,r,l}(s,k)$ belongs to the H\"{o}rmander class
$S^{\vartheta(p,q,r,\ell)}_{\frac{2}{3},\frac{1}{3}} ([0,P_m]\times(0,\infty))$ where
\[
	\vartheta(p,q,r,\ell) = -\frac{1+2p+3q+r+\ell}{3} + (\ell+1)_{-} +
	\left\{
		\begin{array}{ll}
			0, & 1+\ell-2r-p < 0, 
			\\ [0.3 em]
			\frac{1+\ell-2r-p}{3}, & 1+\ell-2r-p \ge 0.
		\end{array}
	\right.
\]
Moreover, for any $k_0 > 0$ and $n \in \mathbb{Z}_+$, we have the estimate
\begin{equation}
\label{eq:apq-estimatecompact}
	| D_s^{n} a_{m,p,q,r,\ell}(s,k)|
	\lesssim
	k^{\vartheta(p,q,r,\ell)} \, W_m(s,k)^{-n},
	\qquad
	\text{for all } (s,k) \in [0,P_m] \times [k_0,\infty),
\end{equation}
where $W_m(s,k) = k^{-\frac{1}{3}} + |\omega_m(s)|$ with $\omega_m(s) = (s-t_{m,1})(t_{m,2}-s)$
and where $\partial K_m^{\rm SB} = \{ \gamma_m(t_{m,1}),\gamma_m(t_{m,2}) \}$.
\end{theorem}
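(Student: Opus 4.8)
The plan is to estimate the explicit term
\[
	a_{m,p,q,r,\ell}(s,k)
	= k^{\beta}\, b_{m,p,q,r,\ell}(\gamma_m(s))\, (\Psi^{r,\ell})^{(p)}\big(k^{\frac13}Z_m(\gamma_m(s))\big),
	\qquad
	\beta = -\frac{1+2p+3q+r+\ell}{3}+(\ell+1)_-,
\]
directly, exploiting that its functional form coincides with the one treated in the single scattering analysis \cite{EcevitEtAl22,EcevitEtAl20}: the only data distinguishing the $m$-th iteration are the smooth amplitude $b_{m,p,q,r,\ell}$ and the phase-defining function $Z_m$, both of which retain the qualitative properties of their single scattering counterparts by Theorem~\ref{Thm:main1}. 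Writing $Z_m(s)$ for $Z_m(\gamma_m(s))$ and $\zeta=k^{\frac13}Z_m(s)$, the first step is a geometric comparability lemma. Since $Z_m$ is smooth and vanishes exactly to first order at the shadow-boundary parameters $t_{m,1},t_{m,2}$, which are also the first-order zeros of $\omega_m(s)=(s-t_{m,1})(t_{m,2}-s)$, the ratio $|Z_m(s)|/|\omega_m(s)|$ extends to a continuous, strictly positive $P_m$-periodic function; hence $|Z_m(s)|\approx|\omega_m(s)|$ uniformly, so that $|\zeta|\approx k^{\frac13}|\omega_m(s)|$ and $W_m(s,k)\approx k^{-\frac13}(1+|\zeta|)$.

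Next I would apply the Fa\`a di Bruno formula to $s\mapsto(\Psi^{r,\ell})^{(p)}(\zeta)$ together with the Leibniz rule for the product with $b_{m,p,q,r,\ell}$, which writes $D_s^n a_{m,p,q,r,\ell}$ as a finite linear combination of terms
\[
	k^{\beta+\frac{i}{3}}\, g_i(s)\, (\Psi^{r,\ell})^{(p+i)}(\zeta),
	\qquad 0\le i\le n,
\]
where each $g_i$ is a smooth, $k$-independent function assembled from $b_{m,p,q,r,\ell}$ and the derivatives of $Z_m\circ\gamma_m$, hence bounded on $[0,P_m]$; the factor $k^{i/3}$ records that $i$ of the $n$ differentiations land on $\zeta$, each producing one power of $k^{\frac13}$. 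It then suffices to bound every such term by $k^{\vartheta(p,q,r,\ell)}W_m^{-n}$, which I would do by splitting $[0,P_m]$ according to the size of $\zeta$, setting $\kappa:=1+\ell-2r-p$. On the shadow bulk $\zeta\le-1$ the Schwartz decay of $\Psi^{r,\ell}$ at $-\infty$ renders every term $O(k^{-\infty})$ while $W_m^{-n}$ stays bounded, so the bound is trivial. On the illuminated bulk $\zeta\ge1$ the asymptotics \eqref{eq:Psirlasymp} give $|(\Psi^{r,\ell})^{(p+i)}(\zeta)|\lesssim|\zeta|^{\kappa-i}\approx(k^{\frac13}|\omega_m|)^{\kappa-i}$, so the powers of $k$ combine to $k^{\beta+\kappa/3}|\omega_m|^{\kappa-i}\approx k^{\beta+\kappa/3}W_m^{\kappa-i}$. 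On the transition zone $|\zeta|\le1$ one has $W_m\approx k^{-\frac13}$ and $(\Psi^{r,\ell})^{(p+i)}(\zeta)=O(1)$, so the term is $\lesssim k^{\beta+i/3}\le k^{\beta+n/3}\approx k^{\beta}W_m^{-n}$.

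The two cases in the definition of $\vartheta$ now emerge from the competition between the two bulks, and both are settled by the single two-sided bound $k^{-\frac13}\le W_m\le C$. When $\kappa\ge0$ one has $\vartheta=\beta+\kappa/3$ and $W_m^{\kappa-i+n}\le C^{\kappa-i+n}$ since $\kappa-i+n\ge0$, while when $\kappa<0$ one has $\vartheta=\beta$ and a direct check of the exponent yields $k^{\kappa/3}W_m^{\kappa-i+n}\lesssim1$; in either case the illuminated term is dominated by $k^{\vartheta}W_m^{-n}$, and the transition term is absorbed because $\vartheta\ge\beta$. A short arithmetic check shows $\beta+\kappa/3=-(p+q+r)+(\ell+1)_-$, reproducing the stated value of $\vartheta$, and this establishes \eqref{eq:apq-estimatecompact}. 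The H\"{o}rmander membership then follows: the weighted bound yields $|D_s^n a_{m,p,q,r,\ell}|\lesssim k^{\vartheta}W_m^{-n}\le k^{\vartheta+n/3}$ because $W_m\ge k^{-\frac13}$, which is exactly the $\delta=\frac13$ gain, and repeating the analysis for the mixed derivatives $D_s^nD_k^j$ — each $D_k$ either lowers the explicit power of $k$ or, through $\partial_k\zeta=\frac13 k^{-2/3}Z_m$, produces a factor gaining at least $k^{-2/3}$ — supplies the $\varrho=\frac23$ gain.

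The main obstacle is the uniform matching across the transition region $|\zeta|\approx1$: one must glue the algebraic asymptotics valid for $\zeta\gtrsim1$ to the merely bounded behavior near $\zeta=0$ into a single estimate without losing sharpness, and it is precisely here that the weight $W_m=k^{-\frac13}+|\omega_m|$ does its work, interpolating between the scale $|\omega_m|$ of the illuminated bulk and the floor $k^{-\frac13}$ of the boundary layer. Verifying that the leading illuminated term $k^{\beta+\kappa/3}W_m^{\kappa-i}$ and the transition term $k^{\beta+i/3}$ are simultaneously dominated by $k^{\vartheta}W_m^{-n}$ with one and the same exponent is what forces the stated case split, and, via the comparability $|Z_m|\approx|\omega_m|$, constitutes the crux of the argument.
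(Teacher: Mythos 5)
Your overall route is the right one, and in fact it is the same route the paper takes: the paper's proof is a one-line deferral to an ``adaptation of the proofs of [Lemmas 2, 3] along with [Corollary 1]'' of \cite{EcevitEtAl22}, and what you have written out --- the explicit form $k^{\beta}b_{m,p,q,r,\ell}\,(\Psi^{r,\ell})^{(p)}(k^{1/3}Z_m)$, the comparability $|Z_m|\approx|\omega_m|$ coming from the simple zeros at $t_{m,1},t_{m,2}$ (hence $1+|\zeta|\approx k^{1/3}W_m$), Fa\`a di Bruno/Leibniz to reduce $D_s^n$ to terms $k^{\beta+i/3}g_i(s)(\Psi^{r,\ell})^{(p+i)}(\zeta)$, and the exponent bookkeeping that produces the two cases of $\vartheta$ --- is precisely the content of that single-scattering argument, transplanted to obstacle $m$ using the properties of $b_{m,p,q,r,\ell}$ and $Z_m$ supplied by Theorem~\ref{Thm:main1}(ii). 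Your arithmetic $\beta+\kappa/3=-(p+q+r)+(\ell+1)_-$ and the treatment of the illuminated bulk and transition zone are correct.

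There is, however, one step that is false as written: the shadow-bulk claim that on $\{\zeta\le-1\}$ the Schwartz decay ``renders every term $O(k^{-\infty})$ while $W_m^{-n}$ stays bounded.'' Neither half is right. Rapid decrease of $\Psi^{r,\ell}$ is decay in $|\zeta|$, not in $k$: at a point $s$ with $Z_m(s)=-k^{-1/3}$ (so $\zeta=-1$), the factor $(\Psi^{r,\ell})^{(p+i)}(-1)$ is a fixed, generically nonzero constant, and the term has size $\sim k^{\beta+i/3}$, which is not $O(k^{-\infty})$; the $O(k^{-\infty})$ statement holds only on compact subsets of $\partial K_m^{\rm SR}$, where $|\zeta|\gtrsim k^{1/3}$. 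Likewise $W_m^{-n}$ is \emph{not} bounded above on this set (it can be as large as $\sim k^{n/3}$ near the shadow boundary); what is true is that it is bounded \emph{below}. The estimate you need is nevertheless true there, and the repair stays inside your own toolkit: use the quantitative form of the Schwartz decay, $|(\Psi^{r,\ell})^{(p+i)}(\zeta)|\lesssim(1+|\zeta|)^{-n}$ on $\zeta\le-1$, together with $1+|\zeta|\approx k^{1/3}W_m$, to get
\begin{equation*}
	k^{\beta+\frac{i}{3}}\,(1+|\zeta|)^{-n}
	\;\approx\;
	k^{\beta+\frac{i-n}{3}}\,W_m^{-n}
	\;\le\;
	k^{\vartheta}\,W_m^{-n},
\end{equation*}
since $i\le n$ and $\beta\le\vartheta$. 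Alternatively, and more economically, note that the estimates of Lemma~\ref{lemma:Psilr} are stated for \emph{all} $\tau\in\mathbb{R}$, so your illuminated-bulk computation $k^{\beta+i/3}(1+|\zeta|)^{\kappa-i}\approx k^{\beta+\kappa/3}W_m^{\kappa-i}$ is valid on the entire parameter interval and the three-region splitting (with its problematic shadow case) can be dispensed with altogether; only the case distinction on the sign of $\kappa=1+\ell-2r-p$ in the final exponent comparison remains.
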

\begin{proof} Follows from an adaptation of the proofs of \cite[Lemmas 2, 3]{EcevitEtAl22} along with \cite[Corollary 1]{EcevitEtAl22}. 

\end{proof}

In the case of single \cite{Ecevit18} and multiple \cite{EcevitEtAl20} scattering 
settings with Dirichlet boundary conditions, it was shown that incorporation of an appropriate
number of terms in the asymptotic expansion \eqref{eq:MTDslow} into integral equation
formulations allows for the development of frequency independent Galerkin boundary element methods.
The same approach was also successfully applied to the case of the Neumann boundary condition
for a single obstacle. For the multiple scattering
problem considered herein, incorporation of the terms $\sigma_{m,\beta}$ into
integral equation formulations transforms the unknown from $\eta_m$ to
$\rho_{m,\beta}$. We introduce these quantities in the following definition.
Throughout the text, we use the standard convention that an empty sum is zero.

\begin{definition} \label{def:quantities}
Given $\beta \in \mathbb{Z}_{+}$, we set
\[
	\mathcal{F}_{\beta} = \left\{ (p,q,r,\ell) \in \mathbb{Z}_{+}^3 \times (-\mathbb{N}) :
	\beta+ 3\vartheta(p,q,r,\ell) > 0 \right\},
\] 
and we define
\begin{equation} \label{eq:sigmarhobetaslow}
	\sigma^{\rm slow}_{m,\beta}(s,k)
	= \!\!\!\!\!\!\! \sum_{(p,q,r,\ell) \in \mathcal{F}_{\beta}} a_{m,p,q,r,\ell}(s,k),
	\qquad
	\rho^{\rm slow}_{m,\beta}(s,k) = \eta^{\rm slow}_m(s,k) - \sigma^{\rm slow}_{m,\beta}(s,k),
\end{equation}
and
\begin{equation} \label{eq:sigmarhobeta}
	\sigma_{m,\beta}(s,k) = e^{ik \, \varphi_m(s)} \sigma^{\rm slow}_{m,\beta}(s,k),
	\qquad
	\rho_{m,\beta}(s,k) = e^{ik \, \varphi_m(s)} \rho^{\rm slow}_{m,\beta}(s,k).
\end{equation}
\end{definition}
Note that $\rho_{m,\beta}(s,k) = \eta_m(s,k) - \sigma_{m,\beta}(s,k)$ and, in particular,
$\rho_{m,0} = \eta_m$.

As in the Neumann single scattering problem \cite{EcevitEtAl22},
sharp explicit estimates on the derivatives of $\rho_{m,\beta}$
with respect to the wavenumber can be used in the design and
analysis of frequency independent Galerkin boundary element
methods for multiple scattering problems. These estimates are summarized in the following.

\begin{theorem} \label{thm:etamder}
For any $m,\beta \in \mathbb{Z}_+$, we have
$\rho_{m,\beta}^{\rm slow}(s,k) \in S^{-\frac{\beta}{3}}_{\frac{2}{3},\frac{1}{3}}([0,P_m] \times (0,\infty))$.
Moreover, for any $k_0 > 1$ and $n \in \mathbb{Z}_+$, we have the estimate
\[
	|D_s^n \rho_{m,\beta}^{\rm slow}(s,k)|
	\lesssim k^{-\frac{\beta}{3}} \, W_m(s,k)^{-n},
	\qquad
	\text{for all } (s,k) \in [0,P_m] \times [k_0,\infty).
\]
\end{theorem}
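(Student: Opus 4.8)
The plan is to regard Theorem~\ref{thm:etamder} as the \emph{remainder} counterpart of the term-by-term estimate established just above (that each $a_{m,p,q,r,\ell}$ lies in $S^{\vartheta(p,q,r,\ell)}_{\frac23,\frac13}$ and satisfies $|D_s^n a_{m,p,q,r,\ell}| \lesssim k^{\vartheta} W_m^{-n}$). The set $\mathcal{F}_\beta$ of Definition~\ref{def:quantities} collects exactly the multi-indices with $\vartheta(p,q,r,\ell) > -\tfrac{\beta}{3}$ (since $\beta + 3\vartheta > 0$ is equivalent to $\vartheta > -\tfrac{\beta}{3}$), so that $\sigma^{\rm slow}_{m,\beta}$ is the partial sum of the expansion \eqref{eq:MT85mult} over all terms of order above $-\tfrac{\beta}{3}$, and $\rho^{\rm slow}_{m,\beta} = \eta^{\rm slow}_m - \sigma^{\rm slow}_{m,\beta}$ is the corresponding tail. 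Mirroring the single-obstacle argument of \cite{EcevitEtAl22}, I would proceed in three steps: first verify that $\mathcal{F}_\beta$ is finite (so that $\sigma^{\rm slow}_{m,\beta}$, and hence $\rho^{\rm slow}_{m,\beta}$, is a well-defined symbol); then invoke the defining remainder property of \eqref{eq:MT85mult}; and finally absorb the finitely many leftover low-order terms using the term-by-term estimate.

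For the finiteness I would evaluate $\vartheta$ on its two branches. Since $\ell \le -1$ gives $(\ell+1)_- = \ell + 1$, the branch $1+\ell-2r-p < 0$ yields $\vartheta = \tfrac{2}{3} - \tfrac{2p}{3} - q - \tfrac{r}{3} + \tfrac{2\ell}{3}$, whereas the branch $1+\ell-2r-p \ge 0$ forces $p = r = 0$ and $\ell = -1$ (because then $0 \le p + 2r \le 1 + \ell \le 0$) and gives $\vartheta = -q$. In both cases $\vartheta(p,q,r,\ell) \to -\infty$ as $p+q+r+|\ell| \to \infty$, and for each fixed value of $\vartheta$ only finitely many multi-indices occur; consequently only finitely many indices satisfy $\vartheta > -\tfrac{\beta}{3}$, so $\mathcal{F}_\beta$ is finite.

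Next I would fix any finite $\mathcal{G} \supseteq \mathcal{F}_\beta$ containing every index with $\vartheta \ge -\tfrac{\beta}{3}$, and split
\[
	\rho^{\rm slow}_{m,\beta}
	= \Big( \eta^{\rm slow}_m - \!\!\! \sum_{(p,q,r,\ell) \in \mathcal{G}} \!\!\! a_{m,p,q,r,\ell} \Big)
	+ \!\!\! \sum_{(p,q,r,\ell) \in \mathcal{G} \setminus \mathcal{F}_\beta} \!\!\! a_{m,p,q,r,\ell}.
\]
The first bracket is a remainder of the expansion \eqref{eq:MT85mult} of order strictly below $-\tfrac{\beta}{3}$, hence lies in $S^{-\beta/3}_{\frac23,\frac13}$ with the weighted bound $\lesssim k^{-\beta/3} W_m^{-n}$. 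Each index in $\mathcal{G} \setminus \mathcal{F}_\beta$ has $\vartheta \le -\tfrac{\beta}{3}$, so the term-by-term estimate gives $|D_s^n a_{m,p,q,r,\ell}| \lesssim k^{\vartheta} W_m^{-n} \le k^{-\beta/3} W_m^{-n}$ for $k \ge k_0 > 1$ — this is precisely where the hypothesis $k_0 > 1$ enters, since $k^{\vartheta + \beta/3} \le 1$ requires $k > 1$. Summing the finitely many such terms preserves the bound, and since $S^{-\beta/3}_{\frac23,\frac13}$ is closed under finite sums both conclusions of the theorem follow.

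The step I expect to require the most care is the control of the first bracket: one must know that the asymptotic expansion \eqref{eq:MT85mult} holds in the sharp $W_m$-weighted sense, i.e.\ that its remainders obey $|D_s^n(\cdot)| \lesssim k^{\mu} W_m^{-n}$ rather than only the crude $k^{\mu + n/3}$. This is built into the definition of the transition-layer H\"{o}rmander classes $S^\mu_{\frac23,\frac13}$ associated with the weight $W_m(s,k) = k^{-1/3} + |\omega_m(s)|$, and is inherited globally on $\partial K_m$ from Theorem~\ref{Thm:main1}(iv); granting this, the remainder of the argument is a formal consequence of the finiteness of $\mathcal{F}_\beta$ together with the term-by-term estimates.
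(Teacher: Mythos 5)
Your skeleton --- finiteness of $\mathcal{F}_\beta$ via the two computed branches of $\vartheta$, the splitting of $\rho^{\rm slow}_{m,\beta}$ into an expansion remainder plus finitely many terms controlled by the term-by-term estimate \eqref{eq:apq-estimatecompact}, and the role of $k_0>1$ --- is the right one; the paper itself disposes of this theorem in one line, by adapting \cite[Theorem 3]{EcevitEtAl22}, and that adaptation has exactly this structure. The symbol-class assertion $\rho^{\rm slow}_{m,\beta} \in S^{-\frac{\beta}{3}}_{\frac{2}{3},\frac{1}{3}}$ also follows as you say, since every index outside $\mathcal{F}_\beta$ has $\vartheta \le -\tfrac{\beta}{3}$ and the expansion of Theorem~\ref{Thm:main1}(iv) is valid in the standard H\"{o}rmander sense.

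The gap is in the weighted derivative estimate, at precisely the step you flag. You fix $\mathcal{G}$ once, independently of $n$, and then claim that the bracket $\eta^{\rm slow}_m - \sum_{\mathcal{G}} a_{m,p,q,r,\ell}$ obeys $|D_s^n(\cdot)| \lesssim k^{-\beta/3} W_m^{-n}$ because this is ``built into the definition'' of the classes $S^{\mu}_{\frac{2}{3},\frac{1}{3}}$ and inherited from Theorem~\ref{Thm:main1}(iv). That justification is false: membership in $S^{\mu}_{\frac{2}{3},\frac{1}{3}}$ --- which is all that the validity of the expansion asserts about its remainders --- gives only $|D_s^n(\cdot)| \lesssim k^{\mu+\frac{n}{3}}$, whereas the bound $k^{\mu}W_m^{-n}$ is strictly stronger away from the shadow boundary, where $W_m$ is of order one while $k^{n/3}\to\infty$. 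In this paper the $W_m$-weighted bounds are \emph{theorems about the individual terms} (the preceding theorem, adapted from \cite{EcevitEtAl22}), not part of the definition of the class, so for the remainder you are assuming essentially what is to be proved; with $\mathcal{G}$ fixed, no symbol-class argument can close this for large $n$. The repair is the standard trick your fixed-$\mathcal{G}$ setup misses: let the truncation grow with $n$. For fixed $n$ take $\mathcal{G}=\mathcal{F}_{\beta+n}$; the ordinary remainder property gives $\eta^{\rm slow}_m - \sigma^{\rm slow}_{m,\beta+n} \in S^{-\frac{\beta+n}{3}}_{\frac{2}{3},\frac{1}{3}}$, hence $|D_s^n(\cdot)| \lesssim k^{-\frac{\beta+n}{3}+\frac{n}{3}} = k^{-\frac{\beta}{3}} \lesssim k^{-\frac{\beta}{3}}\,W_m(s,k)^{-n}$, where the last inequality uses only that $W_m \lesssim 1$ on $[0,P_m]\times[k_0,\infty)$ ($\omega_m$ is bounded and $k^{-1/3}\le k_0^{-1/3}$), with a constant allowed to depend on $n$. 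The finitely many terms indexed by $\mathcal{F}_{\beta+n}\setminus\mathcal{F}_\beta$ all have $\vartheta\le-\tfrac{\beta}{3}$ and are absorbed exactly as in your last step via \eqref{eq:apq-estimatecompact} and $k\ge k_0>1$. With that single replacement your argument is complete and coincides with the intended adaptation of \cite[Theorem 3]{EcevitEtAl22}.
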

\begin{proof}
Follows from an adaptation of the proof of \cite[Theorem 3]{EcevitEtAl22}. 

\end{proof}

\section{Asymptotic expansions of the scattered fields}
\label{sect:4}

In this section, we derive the asymptotic expansions of the envelopes $u^{\rm slow}_m$ \eqref{eq:umslowdef}.
This derivation is based on the integral representation
\begin{equation} \label{eq:umslowinrep}
	u^{\rm slow}_m(x,k)
	= \frac{i}{4} \, e^{-ik \psi(x)} k \int_{\partial K_m} e^{ik \, \varphi_m(y)} H^{(1)}_1(k|x-y|) \, \frac{x-y}{|x-y|} \cdot \nu(y) \, \eta^{\rm slow}_m(y,k) \, ds(y)
\end{equation}
which follows from a combination of \eqref{eq:usjdl}, \eqref{eq:extractphases}, and \eqref{eq:umslowdef}. In connection therewith, since
$\eta^{\rm slow}_m$ rapidly decreases in the shadow region, we realize that the illuminated region asymptotic expansion \eqref{eq:MT85a}
of $\eta^{\rm slow}_m$ can be employed in \eqref{eq:umslowinrep} so as to formally have
\begin{equation} \label{eq:umslowinrepapp1}
	u^{\rm slow}_m(x,k)
	\approx \frac{i}{4} \, e^{-ik \psi_m(x)} \!
	\sum_{j \ge 0} k^{1-j} \!\!
	\int_{\partial K^{\rm IL}_m} \!\!\!\!\!\! e^{ik \, \varphi_m(y)} H^{(1)}_1(k|x-y|) \, \frac{x-y}{|x-y|} \cdot \nu(y) \, a_{m,j}(y) \, ds(y).
\end{equation}
To obtain a further approximation, we utilize the asymptotic behavior of Hankel functions in \eqref{eq:umslowinrepapp1}.
In this connection, we recall that if $s,s_1 \in \mathbb{Z}_+$ with $s_1+1 \ge s$ and $k |x-y| \gg 1$ \cite[Eq. 8.451.3]{GradshteynEtAl00}, then
\begin{equation}
\label{eq:Hankelasymptotics}
	H_{s}^{(1)}(k |x-y|) = H_{s,s_1}(k |x-y|) + \overset{\sim}{H}_{s,s_1}(k |x-y|)
\end{equation}
with
\begin{equation}
\label{eq:HankelTip}
	H_{s,s_1}(k |x-y|) = 
	\sum_{s_2=0}^{s_1} \dfrac{e^{ik|x-y|} \, c_{s,s_2}}{(k|x-y|)^{(s_2+\frac{1}{2})}}
\end{equation}
and
\begin{equation}
\label{eq:HankelTail}
	\overset{\sim}{H}_{s,s_1}(k |x-y|)
	= \theta_{s,s_1}(k|x-y|) \, \dfrac{e^{ik|x-y|} \, c_{s,s_1+1}}{(k|x-y|)^{(s_1+\frac{3}{2})}},
\end{equation}
where $|\theta_{s,s_1}(k |x-y|)| < 1$ and with $\Gamma$ denoting the Gamma function
\begin{equation*}
	c_{s,s_2}
	= \dfrac{i^{s_2} \, \Gamma \left( s + s_2 + \frac{1}{2} \right)}
	{\sqrt{\pi} \, e^{\frac{i \pi (2s + 1)}{4}} \, s_2! \, 2^{s_2 -\frac{1}{2}} \, \Gamma \left( s - s_2 + \frac{1}{2} \right)},
	\qquad
	s,s_2 \in \mathbb{Z}_+.
\end{equation*}
Upon using \eqref{eq:Hankelasymptotics} in \eqref{eq:umslowinrepapp1},
we therefore obtain the formal approximation
\begin{multline} \label{eq:umslowinrepapp2}
	u^{\rm slow}_m(x,k)
	\approx \frac{i}{4} \, e^{-ik \psi_m(x)} \!\!
	\sum_{j,s_2 \ge 0} c_{1,s_2} \, k^{\frac{1}{2}-j-s_2} \!\!
	\\
	\int_{\partial K^{\rm IL}_m} \, e^{ik \, (\varphi_m(y)+|x-y|)} \, \frac{x-y}{|x-y|} \cdot \nu(y) \,
	\frac{a_{m,j}(y)}{|x-y|^{s_2+\frac{1}{2}}} \, ds(y).
\end{multline}
To complete the derivation of the asymptotic expansion of $u^{\rm slow}_m$,
we apply the following version of the stationary phase lemma to the integrals on the right-hand side of \eqref{eq:umslowinrepapp2}.

\begin{theorem}[Stationary phase lemma \cite{Fedoryuk71}] \label{thm:spl}
Let $\psi \in C^{\infty}[a,b]$ be real valued, and let $f \in C^{\infty}_{0}[a,b]$.
Suppose that $t_{0}$ is the only stationary point of $\psi$ in $(a,b)$ and that
$\psi''(t_{0}) \ne 0$. Then, for any $N \in \mathbb{N}$,
\begin{equation*}
	\left|
		\int_{a}^{b} e^{ik\psi(t)} \, f(t) \, dt
		- e^{ik\psi(t_{0})} \sum_{q=0}^{N-1} k^{-(q+\frac{1}{2})} \, S_{q}\left[ f(t),\psi(t) \right](t_{0})
	\right| \le c_{N} \, k^{-N} \, \Vert f \Vert_{C^{N+1}[a,b]}
\end{equation*}
holds for $k > 1$. Here, with $\sigma = \operatorname{sign} \psi''(t_{0})$
and $h(t) = \left| t - t_{0} \right| \left[ 4\sigma \left( \psi(t)-\psi(t_{0}) \right) \right]^{-\frac{1}{2}}$,
\begin{equation*}
	S_{q}\left[ f(t),\psi(t) \right]
	= e^{\frac{i\pi \sigma \left( 2q+1 \right)}{4}} \,
	\dfrac{\Gamma \left( q + \frac{1}{2} \right)}{(2q)!} \, 
	\dfrac{d^{2q}}{dt^{2q}} \left[  \left( h(t) \right)^{q + \frac{1}{2}} f(t) \right] .
\end{equation*}
\end{theorem}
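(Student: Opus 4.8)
This is the classical one-dimensional method of stationary phase, and the plan is to reduce it to two standard mechanisms by a smooth localization and then to match the resulting coefficients with the explicit operators $S_q$. First I would fix $\delta>0$ so small that $[t_0-\delta,t_0+\delta]\subset(a,b)$ and $\psi'$ vanishes only at $t_0$ there, and choose $\chi\in C_0^\infty(t_0-\delta,t_0+\delta)$ with $\chi\equiv1$ on a smaller neighborhood of $t_0$. Splitting $f=\chi f+(1-\chi)f$ then decomposes the integral into a \emph{non-stationary} part, whose integrand is supported where $\psi'\ne0$, and a \emph{stationary} part supported near $t_0$.

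For the non-stationary part I would integrate by parts $N$ times using the operator $L=(ik\psi')^{-1}\,d/dt$, which satisfies $L\,e^{ik\psi}=e^{ik\psi}$ and whose formal transpose maps smooth functions to smooth functions. Since $|\psi'|$ is bounded below on the support of $(1-\chi)f$, each application gains a factor $k^{-1}$; after $N$ steps the boundary terms vanish by compact support, and the remaining integral is bounded by a constant (depending on $\delta$ and on lower bounds for $|\psi'|$) times $k^{-N}$ and a finite sum of derivatives of $f$ and $\chi$ of order at most $N$. This piece contributes nothing to the asymptotic sum and is absorbed into the $c_N k^{-N}\|f\|_{C^{N+1}}$ error.

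For the stationary part I would normalize the phase. Setting $\sigma=\operatorname{sign}\psi''(t_0)$, the function $\sigma(\psi(t)-\psi(t_0))$ is nonnegative near $t_0$ and vanishes to exactly second order, so the substitution $u=\psi(t)-\psi(t_0)$ turns each of the two one-sided integrals into $e^{ik\psi(t_0)}\int_0^{\epsilon}e^{iku}\,\Phi_\pm(u)\,du$, where $\Phi_\pm(u)=\chi f\,(dt_\pm/du)$. Because $u\sim\tfrac12|\psi''(t_0)|(t-t_0)^2$, the Jacobian $dt_\pm/du$ carries a $u^{-1/2}$ singularity at the origin, and this is precisely the source of the half-integer powers $k^{-(q+1/2)}$. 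Applying the integration-by-parts form of Watson's lemma (equivalently, the Morse substitution $v=\operatorname{sign}(t-t_0)[4\sigma(\psi-\psi(t_0))]^{1/2}$, which reduces the integral to the Fresnel integrals $\int e^{ik\sigma v^2/4}v^{2q}\,dv$ whose values follow by differentiating $\int e^{ik\sigma v^2/4}\,dv=2\sqrt{\pi/k}\,e^{i\pi\sigma/4}$ in its quadratic coefficient) produces an asymptotic series in $k^{-(q+1/2)}$ together with a remainder of order $k^{-N}$.

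The step I expect to be the main obstacle is the exact identification of the coefficients of this series with the stated operators $S_q[f,\psi]$. This requires re-expressing the derivatives that arise in the reduced $u$- or $v$-variable back in the original variable $t$ through the function $h$, and checking that the combinatorial and Gamma-function constants assemble into $e^{i\pi\sigma(2q+1)/4}\,\Gamma(q+\tfrac12)/(2q)!$ multiplying $\frac{d^{2q}}{dt^{2q}}[h^{q+1/2}f]\big|_{t_0}$; verifying this term by term (including the leading term, which must reproduce $\sqrt{2\pi/|\psi''(t_0)|}\,e^{i\pi\sigma/4}f(t_0)$) is the delicate bookkeeping, as the conversion between the two variables mixes the Faà di Bruno corrections with the half-integer Jacobian weight. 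A second point of care is the sharp regularity count: a naive Taylor expansion of $\Phi_\pm$ to order $2N$ would cost a $C^{2N}$ norm of $f$, so to obtain the claimed dependence on $\|f\|_{C^{N+1}[a,b]}$ one must instead integrate by parts $N$ times in the reduced single-sided integrals, subtracting the finitely many singular terms that generate the expansion and estimating only the resulting integrable remainder. Collecting the stationary and non-stationary contributions then yields the stated bound for all $k>1$.
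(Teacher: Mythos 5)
The paper does not prove this statement at all: Theorem~\ref{thm:spl} is imported as a classical result, attributed to Fedoryuk \cite{Fedoryuk71}, and used as a black box in the proof of Theorem~\ref{Thm:main2}. So your proposal can only be measured against the standard literature argument, and in outline it does follow that argument: smooth localization at $t_0$, non-stationary-phase integration by parts with $L=(ik\psi')^{-1}d/dt$ away from $t_0$, and the Morse-type substitution $v=\operatorname{sign}(t-t_0)\left[4\sigma(\psi-\psi(t_0))\right]^{1/2}$ near $t_0$, reducing to Fresnel integrals. That skeleton is correct, and your choice of the factor $4$ is exactly the normalization encoded in the paper's function $h$, since $h(t)=(t-t_0)/v(t)$ extends smoothly and positively through $t_0$.

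The genuine gap is that the two steps you explicitly defer are the entire content of the theorem in this quantitative form, and carrying them out is not ``delicate bookkeeping'' that can be waved at: it changes the statement. For the coefficient identification, the missing tool is the Lagrange-inversion (residue) identity for the pushed-forward amplitude $g(v)=f(t(v))\,t'(v)$, namely
\begin{equation*}
	g^{(2q)}(0)=\frac{d^{2q}}{dt^{2q}}\Bigl[\,h(t)^{2q+1}f(t)\Bigr]\Big|_{t=t_0},
\end{equation*}
which, combined with $\int_{\mathbb{R}}v^{2q}e^{ik\sigma v^{2}/4}\,dv=e^{i\pi\sigma(2q+1)/4}\,\Gamma\bigl(q+\tfrac12\bigr)\,(k/4)^{-(q+\frac12)}$ and $4^{q+\frac12}=2^{2q+1}$, yields the $q$-th coefficient
\begin{equation*}
	e^{\frac{i\pi\sigma(2q+1)}{4}}\,\frac{\Gamma\bigl(q+\tfrac12\bigr)}{(2q)!}\,
	\frac{d^{2q}}{dt^{2q}}\Bigl[\,(2h(t))^{2q+1}f(t)\Bigr]\Big|_{t=t_0},
\end{equation*}
i.e.\ $(2h)^{2q+1}$ rather than the displayed $h^{q+1/2}$. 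Your own proposed $q=0$ sanity check detects this: since $h(t_0)=(2|\psi''(t_0)|)^{-1/2}$, the displayed formula gives $\sqrt{\pi}\,h(t_0)^{1/2}f(t_0)e^{i\pi\sigma/4}$, which does not equal the classical leading term $\sqrt{2\pi/|\psi''(t_0)|}\,f(t_0)e^{i\pi\sigma/4}$, whereas $\sqrt{\pi}\,\cdot 2h(t_0)f(t_0)e^{i\pi\sigma/4}$ does. Second, the $\Vert f\Vert_{C^{N+1}}$ remainder bound that you promise to recover by ``integrating by parts $N$ times in the reduced integrals'' cannot be recovered, because it is not true as stated: take $\psi(t)=t^{2}/2$, $f_\epsilon(t)=\phi(t/\epsilon)$ for a fixed bump $\phi$, fix $k$, and let $\epsilon\to0$; then $k^{-(N-1)-\frac12}S_{N-1}[f_\epsilon,\psi](t_0)$ grows like $\epsilon^{-(2N-2)}$, while the integral stays bounded and $\Vert f_\epsilon\Vert_{C^{N+1}}\sim\epsilon^{-(N+1)}$, so for $N\ge4$ no constant $c_N$ can close the inequality. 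Roughly $2N$ derivatives of $f$ on the right-hand side (as in H\"{o}rmander's formulation of the lemma) are necessary as well as sufficient. In short, your text is a sound plan whose two open items are precisely where the theorem, as transcribed, needs repair; a complete proof must execute the residue computation and correct the normalization of $h$ and the norm in the error bound.
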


To clarify the details of this application, for $x \in O^{\rm IL}_{m}$, we denote by $y(x)$ the unique point in the illuminated region $\partial K^{\rm IL}_{m}$
such that $x \in \operatorname{ray}_{m}(y(x))$, and by $t_x$ the unique point in $[0,P_m)$ with $y(t_x) = y(x)$ where
$y(t)$ is an arc-length parametrization of $\partial K_{m}$. For any $x \in O^{\rm IL}_m$, the only stationary point of the
phase $\varphi_m(y) + |x-y|$ in the illuminated region $\partial K^{\rm IL}_m$ is $y(x)$ and $\varphi_m(y(x)) + |x-y(x)| = \psi(x)$ \cite{EcevitReitich09} so that the stationary phase lemma formally entails the approximation
\[
	u^{\rm slow}_m(x,k)
	\approx
	\sum_{j,s_2,q \ge 0} k^{-j-s_2-q} \, f_{m,j,s_2,q}(x)
\]
with
\begin{equation} \label{eq:fmjs2q}
	f_{m,j,s_2,q}(x)
	= \frac{i}{4}
	c_{1,s_2}
	S_q \!
	\left[ \frac{x-y(t)}{|x-y(t)|} \cdot \nu(y(t)) \frac{a_{m,j}(y(t))}{|x-y(t)|^{s_2+\frac{1}{2}}},
	\varphi_m(y(t)) + |x-y(t)|\right]\!(t_x).
\end{equation}
Grouping together the terms modulated by like powers of $k$, we define for $p \in \mathbb{Z}_+$
\begin{equation} \label{eq:Ampdef}
	A_{m,p}(x) = \sum_{\substack{j,s_2,q \ge 0 \\ j+s_2+q = p}} f_{m,j,s_2,q}(x),
	\qquad
	x \in O^{\rm IL}_m.
\end{equation}
With this definition we now state the main result on the asymptotic expansion of the envelopes $u^{\rm slow}_m$. 

\begin{theorem} \label{Thm:main2}
Assume that Theorem~\ref{Thm:main1} holds for some $m \ge 0$. Then $u^{\rm slow}_{m}(x,k)$
belongs to the H\"{o}ramander class $S^{0}_{1,0}(O^{\rm IL}_{m} \times (0,\infty))$
$($see~\eqref{eq:OILm} and \eqref{eq:umslowdef}$)$ and has an asymptotic expansion
\begin{equation}
\label{eq:Aasy}
	u^{\rm slow}_{m}(x,k) \sim \sum_{p=0}^{\infty} k^{-p} A_{m,p}(x),
	\qquad
	(x,k) \in O^{\rm IL}_m \times (0,\infty),
\end{equation}
where $A_{m,p}$ is as defined in \eqref{eq:Ampdef}. 
\end{theorem}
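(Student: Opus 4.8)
The plan is to turn the formal derivation \eqref{eq:umslowinrep}--\eqref{eq:Ampdef} into a rigorous argument by combining a localization step with the stationary phase lemma, and then to track every remainder uniformly over compact subsets of $O^{\rm IL}_m$ so as to read off the symbol estimates defining $S^{0}_{1,0}$. This is the double-layer (Neumann) analogue of the stationary phase analysis carried out for the Dirichlet case in \cite{EcevitReitich09}. First I would start from the integral representation \eqref{eq:umslowinrep} and fix an arbitrary compact set $\mathcal{C} \subset O^{\rm IL}_m$. For $x \in \mathcal{C}$ the unique point $y(x) \in \partial K^{\rm IL}_m$ with $x \in \operatorname{ray}_m(y(x))$ ranges over a compact subset of the \emph{open} illuminated region, bounded away from $\partial K^{\rm SB}_m$. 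I would therefore introduce a smooth cutoff $\chi$ on $\partial K_m$, independent of the individual $x \in \mathcal{C}$, supported in $\partial K^{\rm IL}_m$ away from $\partial K^{\rm SB}_m$ and equal to $1$ on a neighborhood of all these stationary points, and split $\eta^{\rm slow}_m = \chi\,\eta^{\rm slow}_m + (1-\chi)\,\eta^{\rm slow}_m$ in \eqref{eq:umslowinrep}.

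The contribution of $(1-\chi)\,\eta^{\rm slow}_m$ I expect to be rapidly decreasing in $k$, uniformly on $\mathcal{C}$, and hence negligible. Its support is a compact subset of $\partial K_m$ on which the phase $\varphi_m(y)+|x-y|$ has no stationary point, since the only critical point $y(x)$ has been excised; moreover $\eta^{\rm slow}_m \in S^{0}_{\frac{2}{3},\frac{1}{3}}(\partial K_m \times (0,\infty))$ globally by Theorem~\ref{Thm:main1}(iv) (and in fact rapidly decreases on the deep shadow by Theorem~\ref{Thm:main1}(iii)). Repeated integration by parts then gains a factor $k^{-1}$ per step from the non-stationary phase while losing at most $k^{\frac{1}{3}}$ from differentiating the symbol, a net gain of $k^{-\frac{2}{3}}$ per step that yields an $O(k^{-\infty})$ bound together with all its $x$-derivatives.

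On the support of $\chi$ the classical expansion \eqref{eq:MT85a} is available, so, inserting it together with the Hankel splitting \eqref{eq:Hankelasymptotics}, I would reduce $u^{\rm slow}_m$, modulo controlled remainders, to the finite family of oscillatory integrals in \eqref{eq:umslowinrepapp2} (now carrying the factor $\chi$). To each such integral I would apply Theorem~\ref{thm:spl}: the phase $\varphi_m(y)+|x-y|$ has $y(x)$ as its unique stationary point, the value there is exactly $\psi_m(x)$ (cf.\ \eqref{eq:mainphase}), so the prefactor $e^{-ik\psi_m(x)}$ cancels, and the stationary point is nondegenerate because the strict convexity of $K_m$ together with the convexity of the wave-fronts of $\varphi_m$ established in \eqref{eq:mywavefront}--\eqref{eq:mainphase} \cite{EcevitReitich09} forces the second derivative of the combined phase to be nonzero. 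The resulting amplitudes are precisely the terms $S_q[\cdot,\cdot](t_x)$ appearing in \eqref{eq:fmjs2q}, and regrouping by total power of $k$ produces the coefficients $A_{m,p}$ of \eqref{eq:Ampdef} and the asymptotic expansion \eqref{eq:Aasy}.

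Finally, I would assemble the three sources of error---the tail of the envelope expansion \eqref{eq:MT85a} (of order $k^{-N}$ in $S^{0}_{1,0}$), the Hankel tail \eqref{eq:HankelTail} (of order $k^{-(s_1+\frac{3}{2})}$), and the stationary phase remainder $c_N k^{-N}\Vert f \Vert_{C^{N+1}}$ from Theorem~\ref{thm:spl}---and choose the truncation indices so that, for each $N$, the partial sum $\sum_{p<N} k^{-p} A_{m,p}$ approximates $u^{\rm slow}_m$ to order $k^{-N}$. To upgrade this pointwise-in-$x$ statement to membership in $S^{0}_{1,0}(O^{\rm IL}_m \times (0,\infty))$, I would differentiate \eqref{eq:umslowinrep} under the integral sign: $\partial_x$ acts on the smooth quantities $y(x)$, $\psi_m(x)$ and the amplitude, reproducing oscillatory integrals of the same type with smooth amplitudes, while $\partial_k$ of the kernel and of the prefactor $k$ gains powers of $k^{-1}$, so re-running the localization and stationary phase analysis on each derivative yields $|\partial_x^\alpha \partial_k^\beta u^{\rm slow}_m(x,k)| \lesssim k^{-\beta}$ locally uniformly on $O^{\rm IL}_m$. \textbf{The main obstacle} is exactly this last step: the stationary phase lemma as stated is pointwise in the parameter $x$, so establishing the $S^{0}_{1,0}$ symbol estimates requires a parameter-dependent version in which $\partial_x$ is commuted through the asymptotics and every remainder is controlled uniformly on $\mathcal{C}$; the smoothness of $x \mapsto y(x)$ and the uniform nondegeneracy of the Hessian on compact subsets of $O^{\rm IL}_m$ are what make this uniformity possible, but verifying it carefully is the technically demanding part.
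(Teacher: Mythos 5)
Your localization, Hankel splitting, and stationary phase steps essentially reproduce the paper's Parts 1c and 2 (the paper uses a three-piece partition of unity separating the illuminated region, a shadow-boundary collar, and the deep shadow, and treats the collar piece through the refined expansion \eqref{eq:MT85mult} and the estimates of Theorem~\ref{thm:etamder} rather than only the global bound $\eta^{\rm slow}_m \in S^{0}_{\frac{2}{3},\frac{1}{3}}$, but the net gain of $k^{-2/3}$ per integration by parts that you describe is the same mechanism). The genuine gap is in your final step, the one you yourself flag as ``the main obstacle.'' First, the mechanism you propose there is incorrect as stated: differentiating under the integral sign, $\partial_k$ applied to the kernel $H^{(1)}_1(k|x-y|)$ produces $|x-y|\,(H^{(1)}_1)'(k|x-y|)$, which is of the \emph{same} order $k^{-1/2}$ as the kernel itself (no gain of $k^{-1}$), and $\partial_k$ applied to the prefactor $e^{-ik\psi_m(x)}$ brings down $-i\psi_m(x) = O(1)$ (again no gain). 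The decay $|\partial_k u^{\rm slow}_m| \lesssim k^{-1}$ is real, but it arises only from the cancellation of the combined phase $\varphi_m(y)+|x-y|-\psi_m(x)$ at the stationary point $y(x)$ (cf.\ \eqref{eq:mainphase}); exploiting it would force you to rerun stationary phase on every differentiated integral and use the vanishing at $y(x)$ of the amplitude factors so produced --- exactly the delicate parameter-dependent analysis you acknowledge you have not carried out.

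Second, this entire difficulty is avoidable, and the paper avoids it: the missing tool is the fundamental asymptotic expansion lemma of H\"{o}rmander (Theorem~\ref{thm:fael}). That lemma requires only (a) \emph{crude} polynomial bounds on all derivatives --- the paper proves $|D^{\zeta}_{x} D^{n}_{k} u^{\rm slow}_m(x,k)| \le C_{\mathcal{S},\zeta,n}(1+k)^{2|\zeta|+\frac{1}{2}}$ on compact sets, a bound that does not decay in $n$ at all and therefore needs no stationary-phase cancellation, only Leibniz/Fa\`{a} di Bruno bookkeeping applied to \eqref{eq:umslowinrep} --- together with (b) the \emph{undifferentiated} approximation property $\big|u^{\rm slow}_m(x,k) - \sum_{p=0}^{N} k^{-p}A_{m,p}(x)\big| \le C_{\mathcal{S},N}(1+k)^{-(N+\frac{1}{2})}$, which is precisely what your localization-plus-stationary-phase argument delivers. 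The lemma then yields simultaneously $u^{\rm slow}_m \in S^{0}_{1,0}(O^{\rm IL}_m \times (0,\infty))$ and the expansion \eqref{eq:Aasy} in the symbol sense. Without this lemma (or an equivalent interpolation-type argument replacing it), your proof is incomplete at exactly the step that makes the theorem nontrivial: everything before it only establishes a pointwise asymptotic expansion with constants uniform on compacta, not membership in the H\"{o}rmander class.
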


For the rigorous proof of Theorem~\ref{Thm:main2}, we utilize the following classical result.

\begin{theorem}[Fundamental asymptotic expansion lemma \cite{Hormander66,Hormander71}] \label{thm:fael}
Let $\mathcal{M}$ be a $p$-dimensional $C^{\infty}$ manifold,
$\Gamma$ an open conic subset of $\mathcal{M} \times \mathbb{R}^{q}$, and 
$a_{j} \in S^{\nu_{j}}_{\varrho,\delta}(\Gamma)$ where $\nu_{j} \to - \infty$ as $j \to \infty$.
Let $a \in C^{\infty}(\Gamma)$, and assume that for any compact set
$W \subset \Gamma$ and all multi-indices $\beta,\gamma$
\begin{equation*}
	\left| D^{\beta}_{x} D^{\gamma}_{\xi} a(x,\xi) \right|
	\le C \left( 1 + \left| \xi \right| \right)^{\mu},
	\quad
	\left( x, \xi \right) \in W^{c} = \left\{ \left( x, t\xi \right) \, : \, \left( x, \xi \right) \in W, \, t \ge 1 \right\}
\end{equation*}
holds for some $C$ and $\mu$ depending on $\beta,\gamma$ and $W$.
If there exists $\mu_{N} \to - \infty$ as $N \to \infty$ such that for any compact
set $W \subset \Gamma$ and $N \in \mathbb{Z}_+$
\begin{equation*}
	\left| a(x,\xi) - \sum_{j=0}^{N} a_{j}(x,\xi) \right|
	\le C_{W,N} \left( 1 + \left| \xi \right| \right)^{\mu_{N}},
	\quad
	\left( x, \xi \right) \in W^{c} \, ,
\end{equation*}
it follows that $a \in S_{\varrho,\delta}^{\nu}(\Gamma)$
where $\nu = \sup_{j} \nu_{j}$, and that $a \sim \sum a_{j}$.
\end{theorem}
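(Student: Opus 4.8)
The plan is to deduce both conclusions at once from a single statement about remainders: for every $N \in \mathbb{Z}_+$ the tail
\[
	R_N := a - \sum_{j=0}^{N-1} a_j
\]
belongs to $S^{\sigma_N}_{\varrho,\delta}(\Gamma)$, where $\sigma_N := \sup_{j \ge N} \nu_j$ (finite since $\nu_j \to -\infty$, and $\sigma_N \to -\infty$). Once this is established, the membership $a \in S^\nu_{\varrho,\delta}(\Gamma)$ with $\nu = \sup_j \nu_j$ follows by writing $a = \sum_{j<N} a_j + R_N$, the finite sum lying in $S^{\max_{j<N}\nu_j}_{\varrho,\delta} \subseteq S^\nu_{\varrho,\delta}$ and the tail in $S^{\sigma_N}_{\varrho,\delta} \subseteq S^\nu_{\varrho,\delta}$; and the relation $a \sim \sum a_j$ is, by definition, exactly this family of memberships $R_N \in S^{\sigma_N}_{\varrho,\delta}$ with $\sigma_N \to -\infty$. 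Thus everything reduces to the remainder estimate, which I would prove by interpolating the fast pointwise decay supplied by the second hypothesis against the crude derivative bounds supplied by the first.

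The technical engine is the classical symbol interpolation inequality, obtained from the Landau--Kolmogorov (Gagliardo--Nirenberg) inequality after an anisotropic rescaling adapted to $S^m_{\varrho,\delta}$. Fixing a compact $W \subset \Gamma$ and a point $(x_0,\xi_0) \in W^c$ with $s := 1+|\xi_0|$ large, I would introduce, in a coordinate chart for $\mathcal{M}$, the rescaled variables $x = x_0 + s^{-\delta} w$ and $\xi = \xi_0 + s^{\varrho}\zeta$ with $|w|,|\zeta| \le \tfrac{1}{2}$; under the standing convention $0 \le \delta \le \varrho \le 1$ this unit ball stays inside a slight enlargement of $W^c$ for $s$ large, and $1+|\xi| \asymp s$ throughout. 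The effect of the rescaling is that a symbol of order $m$ has all its rescaled derivatives $D^\beta_w D^\gamma_\zeta$ bounded uniformly by a constant times $s^m$, so that symbol estimates become ordinary uniform derivative bounds on the unit ball. The interpolation inequality then reads: if smooth $g$ on the unit ball satisfies $\|g\|_\infty \le A$ and $\|D^\beta_w D^\gamma_\zeta g\|_\infty \le B$ for all $|\beta|+|\gamma| = L$, then $\|D^\beta_w D^\gamma_\zeta g\|_\infty \lesssim A^{1-\ell/L} B^{\ell/L} + A$ whenever $|\beta|+|\gamma| = \ell < L$.

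With these tools I would fix $N$ and a target multi-index $(\beta_0,\gamma_0)$ of order $\ell := |\beta_0|+|\gamma_0|$, and split, for an auxiliary index $N' > N$ to be chosen,
\[
	R_N = \sum_{j=N}^{N'-1} a_j + R_{N'}.
\]
Each $a_j$ in the finite sum is a genuine symbol with $\nu_j \le \sigma_N$, so $D^{\beta_0}_x D^{\gamma_0}_\xi a_j$ already obeys the desired bound of order $\sigma_N - \varrho|\gamma_0| + \delta|\beta_0|$; it remains to estimate $D^{\beta_0}_x D^{\gamma_0}_\xi R_{N'}$. For the tail I would collect, on $W^c$, a value bound $|R_{N'}| \le C(1+|\xi|)^{\mu_{N'-1}}$ from the second hypothesis, with $\mu_{N'-1}$ arbitrarily negative once $N'$ is large; and, taking $L = \ell+1$, order-$L$ derivative bounds $|D^\beta_x D^\gamma_\xi R_{N'}| \le C(1+|\xi|)^{m_1 - \varrho|\gamma| + \delta|\beta|}$ for $|\beta|+|\gamma| = L$, where $m_1$ is a \emph{finite} constant obtained by combining the first hypothesis applied to $a$ with the genuine symbol bounds for the finitely many subtracted $a_j$. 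Transporting both to the rescaled ball gives $A \asymp s^{\mu_{N'-1}}$ and $B \asymp s^{m_1}$, and the interpolation inequality with $\theta = \ell/(\ell+1)$ yields, after unscaling,
\[
	|D^{\beta_0}_x D^{\gamma_0}_\xi R_{N'}| \lesssim (1+|\xi|)^{(1-\theta)\mu_{N'-1} + \theta m_1 - \varrho|\gamma_0| + \delta|\beta_0|}.
\]
Since $1-\theta = 1/(\ell+1) > 0$ is fixed while $m_1$ is fixed and $\mu_{N'-1} \to -\infty$, choosing $N'$ large forces the prefactor exponent below $\sigma_N$, completing the remainder estimate.

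The main obstacle is the interpolation step combined with the anisotropic rescaling: one must check that the Landau--Kolmogorov inequality applies uniformly over the conic set, that the rescaled unit ball remains where the hypotheses hold (this is where $0 \le \delta \le \varrho \le 1$ and the conic structure of $\Gamma$ enter), and---most importantly---that the order-$L$ bound feeding the interpolation is of genuine symbol type with a \emph{finite} order $m_1$. This last point is the crux: the first hypothesis permits the exponent $\mu(\beta,\gamma)$ to grow without control as $|\beta|+|\gamma|$ increases, so it is essential to interpolate against derivatives of order only $\ell+1$, keeping $m_1$ finite, and to extract all the decay from the pointwise bound by sending $N' \to \infty$. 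The remaining work---the uniform rescaled symbol estimates and the bookkeeping of orders---is routine.
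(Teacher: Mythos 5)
The paper offers no proof of this statement to compare against: it is quoted verbatim as a classical lemma with a citation to H\"{o}rmander, so the only benchmark is the standard literature argument --- and your reconstruction is exactly that argument, carried out correctly. Reducing both conclusions to the remainder memberships $R_N \in S^{\sigma_N}_{\varrho,\delta}(\Gamma)$, and then bounding $D^{\beta_0}_x D^{\gamma_0}_\xi R_{N'}$ by Landau--Kolmogorov interpolation between the pointwise decay $(1+|\xi|)^{\mu_{N'-1}}$ and crude order-$(\ell+1)$ derivative bounds, after the anisotropic rescaling $x = x_0 + s^{-\delta}w$, $\xi = \xi_0 + s^{\varrho}\zeta$, is precisely H\"{o}rmander's proof (he interpolates order $1$ from orders $0$ and $2$ and inducts, whereas you go from orders $0$ and $\ell+1$ directly to order $\ell$; both are valid). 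You also correctly identify the crux: the order-$L$ exponent $m_1$ must be finite and independent of $N'$ (it is, since it combines the first hypothesis at the finitely many multi-indices of order $L$ with symbol bounds of orders $\nu_j \le \nu$ for the subtracted $a_j$), so that sending $N' \to \infty$ with $\theta = \ell/(\ell+1)$ fixed drives the interpolated exponent below $\sigma_N$, and the choice of $N'$ may legitimately depend on $(\beta_0,\gamma_0,W)$ since symbol estimates are per-multi-index and per-compact-set. Two points you gesture at should be made explicit in a full write-up, though both are routine: the rescaled ball must have a $W$-dependent radius so that it stays inside a fixed compact subset of the conic set $\Gamma$ (this needs care precisely when $\delta = 0$, where the $x$-displacement does not shrink with $s$, and when $\varrho = 1$, where the angular displacement of $\xi$ does not shrink), and the bounded region $1+|\xi| \le s_0$ is handled trivially by smoothness on compact sets.
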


We shall also need the following estimates.

\begin{lemma} \cite[Lemma 1]{EcevitReitich09} \label{lemma:Psilr}
For all $p,q,r \in \mathbb{Z}_+$ and $\ell \in \mathbb{Z}$, the estimates
\begin{equation} \label{eq:psi-derivatives}
	|(\Psi^{r,\ell})^{(p)}(\tau)|
	\lesssim
	\left\{ \!\!
		\begin{array}{ll}
			(1 + |\tau|)^{\gamma_{r,\ell}-p},
			& \text{if } \ p > 1+\ell -2r \ge 0,
			\\
			(1+|\tau|)^{1+\ell-2r-p},
			& \text{otherwise},
		\end{array}
	\right.
\end{equation}
hold for all $\tau \in \mathbb{R}$ where
\[
	1+\ell -2r \equiv \gamma_{r,\ell} \mod 3
	\qquad
	\text{with}
	\qquad
	\gamma_{r,\ell} \in \{ -3,-2,-1\}.
\]
\end{lemma}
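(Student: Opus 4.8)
The plan is to verify the bound \eqref{eq:psi-derivatives} separately on three regions of the real line — a fixed compact set such as $[-1,1]$, the shadow half-line $\tau \to -\infty$, and the illuminated half-line $\tau \to +\infty$ — and then combine them, which is legitimate because $(1+|\tau|)^{s}$ is comparable to a positive constant on any compact set and the three regions cover $\mathbb{R}$. On $[-1,1]$ the bound is immediate: $\Psi^{r,\ell}$ is smooth, so each derivative $(\Psi^{r,\ell})^{(p)}$ is bounded there, while the right-hand side of \eqref{eq:psi-derivatives} is bounded above and below by positive constants. On the shadow side I would invoke that $\Psi^{r,\ell}$ rapidly decreases in the sense of Schwartz as $\tau \to -\infty$; since the Schwartz property is preserved under differentiation, each $(\Psi^{r,\ell})^{(p)}$ decays faster than any negative power of $(1+|\tau|)$, hence is $\lesssim (1+|\tau|)^{s}$ for \emph{any} exponent $s$, dominating both cases of the right-hand side on $(-\infty,-1]$.

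The substance of the argument is the illuminated regime $\tau \to +\infty$, where I would differentiate the asymptotic expansion \eqref{eq:Psirlasymp} term by term. The key structural input — imported from the construction of the Melrose--Taylor functions in \cite{MelroseTaylor85} — is that $(\Psi^{r,\ell})^{(p)}$ again admits an asymptotic expansion, obtained by formally differentiating \eqref{eq:Psirlasymp}: because $\Psi^{r,\ell}$ arises from an oscillatory (Airy-type) integral, differentiation under the integral sign is legitimate and yields
\[
	(\Psi^{r,\ell})^{(p)}(\tau)
	\sim \sum_{j=0}^{\infty} a_{r,\ell,j} \, \frac{d^{p}}{d\tau^{p}} \, \tau^{1+\ell-2r-3j}
	\qquad \text{as } \tau \to \infty,
\]
whose size is then governed by its leading surviving term.

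Here the case split enters, and it is the crux of the power-counting. Since $\ell$ and $r$ are integers, every exponent $1+\ell-2r-3j$ is an integer, and the monomials with nonnegative exponent are genuine polynomials that are annihilated once differentiated more than their degree. If $1+\ell-2r \ge 0$ and $p > 1+\ell-2r$, then every nonnegative-power term is killed by $d^{p}/d\tau^{p}$, and the leading surviving exponent is the unique member of $\{-3,-2,-1\}$ congruent to $1+\ell-2r$ modulo $3$, namely $\gamma_{r,\ell}$; its $p$-th derivative is $O(\tau^{\gamma_{r,\ell}-p})$, giving the first case of \eqref{eq:psi-derivatives}. In the complementary situation — either $1+\ell-2r<0$, so the leading exponent is already negative and never annihilated, or $p \le 1+\ell-2r$, so the leading monomial survives — the dominant term is $\tau^{1+\ell-2r}$, whose $p$-th derivative is $O(\tau^{1+\ell-2r-p})$, giving the second case. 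On $[1,\infty)$ one has $(1+|\tau|) \asymp \tau$, so these match the claimed bounds.

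The main obstacle is not the elementary bookkeeping with powers but the rigorous justification of term-by-term differentiation of \eqref{eq:Psirlasymp}; this rests on the explicit integral representation of $\Psi^{r,\ell}$ from the Melrose--Taylor calculus and is the one place where the specific nature of these special functions, rather than a generic function merely possessing an asymptotic expansion, is essential. A secondary technical point is to make the term-by-term differentiated expansion quantitative, controlling the remainder after finitely many terms by the next power of $\tau$ so that the leading-term estimate is genuinely an upper bound on $(\Psi^{r,\ell})^{(p)}$ rather than on a formal series.
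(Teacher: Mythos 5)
Your proposal is sound, but there is an important contextual point: the paper itself gives no proof of this lemma --- it is imported verbatim, with the citation to \cite[Lemma 1]{EcevitReitich09}, so the only ``in-paper proof'' is that reference, and your blind reconstruction is in effect a reconstruction of the cited source's argument. Your route matches it: the three-region splitting (compact set, Schwartz decay on the shadow half-line, expansion on the illuminated half-line), and, crucially, the integer-power bookkeeping in which $d^{p}/d\tau^{p}$ annihilates every monomial $\tau^{1+\ell-2r-3j}$ of nonnegative degree strictly less than $p$, so that when $p > 1+\ell-2r \ge 0$ the leading surviving exponent is the unique representative $\gamma_{r,\ell} \in \{-3,-2,-1\}$ of $1+\ell-2r$ modulo $3$; this is exactly the mechanism producing the case split in \eqref{eq:psi-derivatives}. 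You also correctly isolate the one genuinely non-elementary step: the lemma does \emph{not} follow from the hypotheses recorded in Theorem~\ref{Thm:MT85}(ii) alone, since an asymptotic expansion cannot in general be differentiated term by term (a function such as $e^{-\tau}\sin(e^{2\tau})$ has the identically vanishing expansion as $\tau \to \infty$ while its derivative is unbounded), so the symbol-class version of \eqref{eq:Psirlasymp} --- valid together with all derivatives, as furnished by the Airy-quotient construction behind \cite[Lemma 9.9]{MelroseTaylor85} --- is an indispensable external input. Deferring to that representation, as you do, is legitimate and is precisely what the cited proof rests on; had you instead claimed the estimate from the stated hypotheses alone, that would have been a genuine gap.
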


\noindent
\emph{Proof of Theorem~\ref{Thm:main2}} \
Given a compact set $\mathcal{S} \subset O^{\rm IL}_{m}$, the visibility and no-occlusion
conditions imply that there exists an $\varepsilon >0$ (depending only on $\mathcal{S}$)
such that $\mathcal{S}$ is contained in the open set 
\begin{equation}\label{eq:OILmeps}
	O^{\rm IL}_{m,3\varepsilon}
	:= \bigcup
	\left\{
		\operatorname{ray}_{m}(y) \, : \,
		y \in \partial K^{\rm IL}_{m} \backslash \partial K^{\rm SB}_{m,3\varepsilon}
	\right\}
\end{equation}
where
\begin{equation} \label{eq:SBeps}
	\partial K_{m,\varepsilon}^{\rm SB}
	:= \left\{
		y \in \partial K_{m} : \operatorname{dist} \left( y, \partial K_{m}^{\rm SB} \right) \le \varepsilon 
	\right\}.
\end{equation}
Introduce a smooth partition of unity $\{ \rho_{1}, \rho_{2}, \rho_3 \}$ on $\partial K_{m}$ such that
\begin{equation}
\label{eq:defparunity}
	\left\{ \!\!\!
		\begin{tabular}{ll}
			(i) & \!\!\!\!\!$\rho_{1} = 1$ on
			$\partial K^{\rm IL}_{m} \backslash \partial K^{\rm SB}_{m,2\varepsilon}$
			and $\rho_{1} = 0$ on
			$\partial K^{\rm SR}_{m} \cup \partial K^{\rm SB}_{m,\varepsilon}$,
			\\
			(ii) & \!\!\!\!\!$\rho_{3} = 1$ on
			$\partial K^{\rm SR}_{m} \backslash \partial K^{\rm SB}_{m,2\varepsilon}$
			and $\rho_{3} = 0$ on
			$\partial K^{\rm IL}_{m} \cup \partial K^{\rm SB}_{m,\varepsilon}$,
		\end{tabular}
	\right.
\end{equation}
and use \eqref{eq:umslowinrep} to write
\begin{multline} \label{eq:umslow}
	u^{\rm slow}_{m}(x,k)
	= \frac{ik}{4} \sum_{j=1}^{3} I_{j}(x,k)
	\\
	= \frac{ik}{4} \sum_{j=1}^{3}
	e^{-ik\psi_{m}(x)} \!\!\! 
	\int\limits_{\partial K_{m}} \! \rho_j(y) \, H^{(1)}_1(k|x-y|) \, \frac{x-y}{|x-y|} \cdot \nu(y) \, e^{ik \, \varphi_m(y)} \eta^{\rm slow}_{m}(y,k) \, ds(y).
\end{multline}
For clarity, we divide the proof into three parts.

\noindent
{\bf Part 1:} Here we show for all $k_0 > 0$, $\zeta \in \mathbb{Z}^2_+$, and $n \in \mathbb{Z}_+$ that
\begin{equation}
\label{eq:rapiddecaynew} 
	\left| D^{\zeta}_{x} D^{n}_{k} u^{\rm slow}_m(x,k) \right|
	\le C_{\mathcal{S},\zeta,n} \left( 1 + k \right)^{\mu_{\mathcal{S},\zeta,n}},
	\qquad
	(x,k) \in \mathcal{S} \times (k_{0},\infty),
\end{equation}
where $\mu_{\mathcal{S},\zeta,n} = 2|\zeta| + \frac{1}{2}$.

\noindent
{\bf Part 1a:} First we show for $j \in \{1,2,3 \}$, $\zeta \in \mathbb{Z}^2_+$, and $n \in \mathbb{Z}_+$ that
\begin{multline} \label{eq:Ijderivativecompactlast}
	D^{\zeta}_{x} D^{n}_{k} \, I_{j}(x,k)
	= e^{-ik \psi_{m}(x)} \!\!\!
	\sum_{\substack{0 \le n_1 \le 2|\zeta| \\ 0 \le n_2 \le n \\ 0 \le n_3 \le n+|\zeta|}} \!\!\!
	k^{n_1} \!
	\int_{\partial K_{m}} \!\!\!
	e^{ik \varphi_{m}(y)} \,
	H^{(1)}_{n_2}(k|x-y|) \,
	D^{n_3}_{k} \left[ \eta_{m}^{\rm slow}(y,k) \right]
	\\
	F_{j,\zeta,n,n_1,n_2,n_3}(x,y) \,
	ds(y)
\end{multline}
where $F_{j,\zeta,n,n_1,n_2,n_3}$ is smooth on
$O^{\rm IL}_{m} \times \partial K_{m}$ and
$\operatorname{supp} F_{j,\zeta,n,n_1,n_2,n_3}(x,\cdot) \subset \operatorname{supp} \rho_{j}(\cdot)$
for all $x \in O^{\rm IL}_{m}$. 
To this end, we apply the multivariate Leibniz's rule for the derivatives of products
to have
\begin{multline*}
	D^{\zeta}_{x} D^{n}_{k} \, I_{j}(x,k)
	= \sum_{\zeta_2 \le \zeta_1 \le \zeta \atop 0 \le n_2 \le n_1 \le n}
	\binom{\zeta}{\zeta_1} \binom{\zeta_1}{\zeta_2}
	\binom{n}{n_1} \binom{n_1}{n_2} \, D^{\zeta-\zeta_1}_{x} D^{n-n_1}_{k} \left[ e^{-ik \psi_{m}(x)} \right]
	\\
	\times \int_{\partial K_{m}} \!\!\!
	\rho_{j}(y) \,
	D^{\zeta_1-\zeta_2}_{x} D^{n_1-n_2}_{k}
	\left[ H^{(1)}_{1}(k|x-y|) \right]
	\\
	D^{\zeta_2}_{x} \left[ \frac{x-y}{|x-y|} \cdot \nu(y) \right]
	D^{n_2}_{k}
	\left[ e^{ik \, \varphi_m(y)} \eta^{\rm slow}_{m}(y,k) \right]
	ds(y).
\end{multline*}
Invoking the derivatives with respect to $k$, we get
\begin{multline*}
	D^{\zeta}_{x} D^{n}_{k} \, I_{j}(x,k)
	= \!\!\!\!\!\!\!\!\!\!\!\! \sum_{\zeta_2 \le \zeta_1 \le \zeta \atop 0 \le n_3 \le n_2 \le n_1 \le n}
	\!\!\!\!\!\!\!\!\!\!\!
	\binom{\zeta}{\zeta_1} \!
	\binom{\zeta_1}{\zeta_2} \!
	\binom{n}{n_1} \!
	\binom{n_1}{n_2} \!
	\binom{n_2}{n_3} \!
	D^{\zeta-\zeta_1}_{x} \left[ e^{-ik \psi_{m}(x)} \, \left( -i\psi_{m}(x) \right)^{n-n_1} \right]
	\\
	\hspace{0.3cm}\times \int_{\partial K_{m}} \!\!\!
	\rho_{j}(y) \,
	D^{\zeta_1-\zeta_2}_{x}
	\left[ \left( \! H^{(1)}_{1} \! \right)^{\!\!(n_1-n_2)} (k|x-y|) \left| x-y \right|^{n_1-n_2}\right]
	D^{\zeta_2}_{x} \left[ \frac{x-y}{|x-y|} \cdot \nu(y) \right]
	\\
	e^{ik \varphi_{m}(y)} 
	\left[ i \, \varphi_{m}(y) \right]^{n_2-n_3} 
	D^{n_3}_{k} \left[ \eta_{m}^{\rm slow}(y,k) \right]
	ds(y).
\end{multline*}
Using Leibniz's rule once more, we therefore obtain
\begin{multline*}
	D^{\zeta}_{x} D^{n}_{k} \, I_{j}(x,k)
	= \!\!\!\!\!\!\!\!\!\!\!\!
	\sum_{\zeta_2 \le \zeta_1 \le \zeta \atop 0 \le n_3 \le n_2 \le n_1 \le n}
	\sum_{\zeta_3 \le \zeta - \zeta_1 \atop \zeta_4 \le \zeta_1 - \zeta_2}
	\binom{\zeta}{\zeta_1} \!
	\binom{\zeta_1}{\zeta_2} \!
	\binom{\zeta-\zeta_1}{\zeta_3} \!
	\binom{\zeta_1-\zeta_2}{\zeta_4} \!
	\binom{n}{n_1} \!
	\binom{n_1}{n_2} \!
	\binom{n_2}{n_3} \!
	\\
	\times
	D^{\zeta-\zeta_1-\zeta_3}_{x}
	\left[ e^{-ik \psi_{m}(x)} \right] D^{\zeta_3}_{x} \left[ \left( -i\psi_{m}(x) \right)^{n-n_1} \right]
	\\
	\times \int_{\partial K_{m}} \!\!\!
	\rho_{j}(y) \,
	D^{\zeta_1-\zeta_2-\zeta_4}_{x}
	\left[ \left( \! H^{(1)}_{1} \! \right)^{\!\!(n_1-n_2)} (k|x-y|) \right]
	D^{\zeta_4}_{x} \left[ \left| x-y \right|^{n_1-n_2}\right]
	\\
	D^{\zeta_2}_{x} \left[ \frac{x-y}{|x-y|} \cdot \nu(y) \right]
	e^{ik \varphi_{m}(y)} 
	\left[ i \, \varphi_{m}(y) \right]^{n_2-n_3} 
	D^{n_3}_{k} \left[ \eta_{m}^{\rm slow}(y,k) \right]
	ds(y),
\end{multline*}
and we rewrite this as
\begin{multline} \label{eq:dersofIjk}
	D^{\zeta}_{x} D^{n}_{k} \, I_{j}(x,k)
	= \sum_{\zeta_2 \le \zeta_1 \le \zeta \atop 0 \le n_3 \le n_2 \le n_1 \le n}
	\sum_{\zeta_3 \le \zeta - \zeta_1 \atop \zeta_4 \le \zeta_1 - \zeta_2}
	D^{\zeta-\zeta_1-\zeta_3}_{x} \left[ e^{-ik \psi_{m}(x)} \right]
	\\
	\times \int_{\partial K_{m}} \!\!\!
	\rho_{j}(y) \,
	e^{ik \varphi_{m}(y)} \,
	D^{\zeta_1-\zeta_2-\zeta_4}_{x} \left[ \left( \! H^{(1)}_{1} \! \right)^{\!\!(n_1-n_2)} (k|x-y|) \right]
	D^{n_3}_{k} \left[ \eta_{m}^{\rm slow}(y,k) \right]
	\\
	F_{\zeta,\zeta_1,\zeta_2,\zeta_3,\zeta_4,n,n_1,n_2,n_3}(x,y) \,
	ds(y).
\end{multline}
Next we use multivariate Fa\`{a} di Bruno formula~\cite{ConstantineSavits96} for the
derivatives of compositions which entails for $\zeta' = \zeta-\zeta_1-\zeta_3 \in \mathbb{Z}^2_+ \setminus \{ (0,0) \}$
\begin{multline*}
	D_{x}^{\zeta'} \left[ e^{-ik \psi_{m}(x)} \right]
	= e^{-ik \psi_{m}(x)}
	\\
	\times \left( \zeta'!
	\sum_{1 \le n_4, n'_4 \le |\zeta'|} \left( -k \right)^{n_4} \,
	\sum_{FdB(\zeta';n_4,n'_4)}
	\prod_{1 \le j \le n'_4} \dfrac{1}{\ell_{j}!}
	\left( \dfrac{D^{\mu_{j}}_{x} \left[ \psi_{m}(x) \right]}{\mu_{j}!} \right)^{\ell_{j}} \right)
\end{multline*}
and for $\zeta' = \zeta_1-\zeta_2-\zeta_4 \in \mathbb{Z}^2_+ \setminus \{ (0,0) \}$ and $n' = n_1-n_2 \in \mathbb{Z}_+$
\begin{multline*}
	D_{x}^{\zeta'} \left[ \left( \! H^{(1)}_{1} \! \right)^{\!\!(n')} \!\!\! \left( k \left| x-y \right| \right) \right]
	= \zeta'! \!\!\!\!\!\! \sum_{1 \le n_5, n'_5 \le |\zeta'|}
	\!\!\! \left( \! H^{(1)}_{1} \! \right)^{\!\!(n'+n_5)} \!\!\! \left( k \left| x-y \right| \right) k^{n_5} \!\!\!
	\!\!\!\! 
	\\
	\sum_{FdB(\zeta';n_5,n'_5)}
	\prod_{1 \le j \le n'_5} \dfrac{1}{\ell_{j}!}
	\left( \dfrac{D^{\mu_{j}}_{x} \left| x-y \right|}{\mu_{j}!} \right)^{\ell_{j}}
\end{multline*}
where, in general, for $\zeta \in \mathbb{Z}_+ \setminus \{ (0,0) \}$ and $n_1,n_2 \in \{1,\ldots, |\zeta| \}$
\begin{multline*}
	FdB(\zeta;n_1,n_2)
	= \Bigg\{ \left( \ell_{1},\ldots, \ell_{n_2};\mu_{1},\ldots, \mu_{n_2} \right) \in \mathbb{N}^{n_2} \times (\mathbb{Z}_+^2)^{n_2} :
	\\
	\sum_{j=1}^{n_2} \ell_{j} = n_1, \quad \sum_{j=1}^{n_2} \ell_{j} \mu_{j} = \zeta, \quad (0,0) \prec \mu_{1} \prec \ldots \prec \mu_{n_2} \Bigg\},
\end{multline*}
and where the notation $\mu_j \prec \mu_{j'}$ means that if $\mu_j = (\mu_{j,1},\mu_{j,2})$ and $\mu_{j'} = (\mu_{j',1},\mu_{j',2} )$, then
\[
	\left[ |\mu_j| < |\mu_{j'}| \right]
	\quad \text{or} \quad
	\left[ |\mu_j| = |\mu_{j'}| \ \text{and} \ \mu_{j,1} < \mu_{j',1} \right]
\]
or
\[
	\left[ |\mu_j| = |\mu_{j'}| \ \text{and} \ \mu_{j,1} = \mu_{j',1} \ \text{and} \ \mu_{j,2} < \mu_{j',2} \right] .
\]
We therefore deduce for $\zeta' = \zeta-\zeta_1-\zeta_3 \in \mathbb{Z}^2_+$
\begin{equation} \label{eq:eikpsider}
	D_{x}^{\zeta'} \left[ e^{-ik \psi_{m}(x)} \right]
	= e^{-ik \psi_{m}(x)}
	\sum_{0 \le n_4 \le |\zeta'|} k^{n_4} \,
	F_{\zeta',n_4}(x),
\end{equation}
and for $\zeta' = \zeta_1-\zeta_2-\zeta_4 \in \mathbb{Z}^2_+$ and $n' = n_1-n_2 \in \mathbb{Z}_+$
\begin{equation} \label{eq:Hankeldxdk}
	D_{x}^{\zeta'} \left[ \left( \! H^{(1)}_{1} \! \right)^{\!\!(n')} \!\!\! \left( k \left| x-y \right| \right) \right]
	= \sum_{0\le n_5 \le |\zeta'|}
	\!\!\! \left( \! H^{(1)}_{1} \! \right)^{\!\!(n'+n_5)} \!\!\! \left( k \left| x-y \right| \right) k^{n_5} \,
	F_{\zeta',n_5}(x,y).
\end{equation}
Using \eqref{eq:eikpsider} and \eqref{eq:Hankeldxdk} in \eqref{eq:dersofIjk}, we get
\begin{multline*}
	D^{\zeta}_{x} D^{n}_{k} \, I_{j}(x,k)
	= e^{-ik \psi_{m}(x)} \!\!\!\!\!\
	\sum_{\zeta_2 \le \zeta_1 \le \zeta \atop 0 \le n_3 \le n_2 \le n_1 \le n}
	\sum_{\zeta_3 \le \zeta - \zeta_1 \atop \zeta_4 \le \zeta_1 - \zeta_2}
	\sum_{0 \le n_4 \le |\zeta-\zeta_1-\zeta_3| \atop 0 \le n_5 \le |\zeta_1-\zeta_2-\zeta_4|}
	k^{n_4+n_5} \,
	\\
	\times \int_{\partial K_{m}} \!\!\!
	\rho_{j}(y) \,
	e^{ik \varphi_{m}(y)} \,
	\left( H^{(1)}_{1} \! \right)^{\!\!(n_1-n_2+n_5)} (k|x-y|) \,
	D^{n_3}_{k} \left[ \eta_{m}^{\rm slow}(y,k) \right]
	\\
	F_{\zeta,\zeta_1,\zeta_2,\zeta_3,\zeta_4,n,n_1,n_2,n_3,n_4,n_5}(x,y) \,
	ds(y).
\end{multline*}
Upon noting that \cite[Equations 9.1.31 and 9.1.6]{AbramowitzStegun64}
\[
	D^{n}_{z} \left[ H^{(1)}_{1}(z) \right]
	= \dfrac{1}{2^{n}} \sum_{n_1=0}^{n} \binom{n}{n_1} \left( -1 \right)^{n_1} H^{(1)}_{1-n+2n_1}(z)
	\quad
	\text{and}
	\quad
	H^{(1)}_{-n}(z)
	= e^{i \pi n} H^{(1)}_{n}(z)
\]
for any $n \in \mathbb{Z}_+$, we therefore obtain
\begin{multline} \label{eq:Ijderivative}
	D^{\zeta}_{x} D^{n}_{k} \, I_{j}(x,k)
	= e^{-ik \psi_{m}(x)} \!\!\!\!\!\!\!\!
	\sum_{\zeta_2 \le \zeta_1 \le \zeta \atop 0 \le n_3 \le n_2 \le n_1 \le n}
	\sum_{\zeta_3 \le \zeta - \zeta_1 \atop \zeta_4 \le \zeta_1 - \zeta_2}
	\sum_{0 \le n_4 \le |\zeta-\zeta_1-\zeta_3| \atop 0 \le n_5 \le |\zeta_1-\zeta_2-\zeta_4|}
	\sum_{0 \le n_6 \le 1+ n_1-n_2+n_5}
	\!\!\!\!\!\!
	k^{n_4+n_5}
	\\
	\hspace{2.9cm}
	\times \int_{\partial K_{m}} \!\!\!
	e^{ik \varphi_{m}(y)} \,
	H^{(1)}_{n_6}(k|x-y|) \,
	D^{n_3}_{k} \left[ \eta_{m}^{\rm slow}(y,k) \right]
	\\
	F_{j,\zeta,\zeta_1,\zeta_2,\zeta_3,\zeta_4,n,n_1,n_2,n_3,n_4,n_5,n_6}(x,y) \,
	ds(y)
\end{multline}
where $F_{j,\zeta,\zeta_1,\zeta_2,\zeta_3,\zeta_4,n,n_1,n_2,n_3,n_4,n_5,n_6}$ is smooth on
$O^{\rm IL}_{m} \times \partial K_{m}$ and
\[
	\operatorname{supp} F_{j,\zeta,\zeta_1,\zeta_2,\zeta_3,\zeta_4,n,n_1,n_2,n_3,n_4,n_5,n_6}(x,\cdot) \subset \operatorname{supp} \rho_{j}(\cdot)
\]
for all $x \in O^{\rm IL}_{m}$. Therefore, with obvious identifications, \eqref{eq:Ijderivativecompactlast} follows from \eqref{eq:Ijderivative}.

\noindent
{\bf Part 1b:}
Here we show for all $k_{0} \in (0,\infty)$, $\zeta \in \mathbb{Z}^2_+$, and $n \in \mathbb{Z}_+$,
\begin{equation} \label{eq:rapiddecaynew1}
	\left| D^{\zeta}_{x}D^{n}_{k} I_{1}(x,k) \right| \le C_{S,\zeta,n} \left( 1 + k \right)^{2|\zeta|-\frac{1}{2}},
	\qquad
	(x,k) \in S \times (k_{0},\infty).
\end{equation}
Since the left- and right-hand sides of \eqref{eq:rapiddecaynew1} depend continuously on $k$ and  $\mathcal{S}$ is compact,
we need to prove \eqref{eq:rapiddecaynew1} only for $k_{0} \gg 1$. 
Since the compact sets $\mathcal{S}$ and $\partial K_m$ are disjoint, we have $\operatorname{dist}(\mathcal{S},\partial K_m) > 0$,
and therefore we may assume that $k_{0}$ is sufficiently large so that, for all $k > k_{0}$, \eqref{eq:Hankelasymptotics} is
satisfied for all $(x,y) \in \mathcal{S} \times \partial K_{m}$. However, in this case, \eqref{eq:Ijderivativecompactlast} implies
\begin{multline*}
	\left| D^{\zeta}_{x} D^{n}_{k} \, I_{1}(x,k) \right| 
	\le \!\! \sum_{\substack{0 \le n_1 \le 2|\zeta| \\ 0 \le n_2 \le n \\ 0 \le n_3 \le n+|\zeta|}} \!\!\!
	k^{n_1} \!
	\int_{\partial K_{m}} \!
	\left| H^{(1)}_{n_2}(k|x-y|) \right| \,
	\left| D^{n_3}_{k} \left[ \eta_{m}^{\rm slow}(y,k) \right] \right|
	\\
	\left| F_{1,\zeta,n,n_1,n_2,n_3}(x,y) \right|
	ds(y)
\end{multline*}
so that \eqref{eq:rapiddecaynew1} follows from \eqref{eq:Hankelasymptotics}-\eqref{eq:HankelTip}-\eqref{eq:HankelTail}
and $\eta^{\rm slow}_m \in S^0_{\frac{2}{3},\frac{1}{3}}(\partial K_m \times(0,\infty))$.

\noindent
{\bf Part 1c:}
For $j=2,3$, here we prove that $I_{j} \in S^{-\infty}_{1,0}(O^{\rm IL}_{m,3\varepsilon} \times (0,\infty))$.
For this, we have to show, for any compact set $S \subset O^{\rm IL}_{m,3\varepsilon}$,
$k_{0} \in (0,\infty)$, $\zeta \in \mathbb{Z}^2_+$, and $n,N \in \mathbb{Z}_+$,
\begin{equation}
\label{eq:rapiddecay}
	\left| D^{\zeta}_{x}D^{n}_{k} I_{j}(x,k) \right| \le C_{S,N,\zeta,n} \left( 1 + k \right)^{-N},
	\qquad
	(x,k) \in S \times (k_{0},\infty).
\end{equation}
Reasoning as in Part 1b, we deduce that it is sufficient to prove \eqref{eq:rapiddecay} only for $k_{0} \gg 1$. More precisely, we may assume that
$k_{0}$ is sufficiently large so that, for all $k > k_{0}$, \eqref{eq:Hankelasymptotics} is satisfied for all $(x,y) \in S \times \partial K_{m}$.

With this assumption, we now prove \eqref{eq:rapiddecay}. In connection therewith, \eqref{eq:Ijderivative}
implies via triangle inequality the sufficiency of establishing, for any smooth function $F_j: O^{\rm IL}_{m} \times \partial K_{m} \to \mathbb{R}$
with $\operatorname{supp} F_j(x,\cdot) \subset \rho_j(\cdot)$ for all $x \in O^{\rm IL}_{m}$,  the estimates   
\begin{equation}
\label{eq:simplifyme}
	\left| I_{F_j,n,s}(x,k) \right| \le C_{S,N,F_j,n,s} \left( 1 + k \right)^{-N},
	\qquad
	(x,k) \in S \times (k_{0},\infty),
\end{equation}
for all $n,s,N \in \mathbb{Z}_+$ where, for $(x,k) \in O^{\rm IL}_{m} \times (0,\infty)$,
\[
	I_{F_j,n,s}(x,k) = 
	\int_{\partial K_{m}}
	e^{ik \varphi_{m}(y)} \,
	H^{(1)}_{s}(k|x-y|) \
	D^{n}_{k} \left[ \eta^{\rm slow}_{m}(y,k) \right]
	F_j(x,y) \, ds(y).
\]

Statement (\ref{eq:simplifyme}), in turn, will follow provided we prove that, for all $\beta,n,s \in \mathbb{Z}_+$,
\begin{equation}
\label{eq:simplifyfurther}
	\left| I_{\beta,F_{j},n,s}(x,k) \right| \le C_{S,N,\beta,F_{j},n,s} \left( 1 + k \right)^{-N},
	\qquad
	(x,k) \in S \times (k_{0},\infty)
\end{equation}
holds for all $N \in \mathbb{Z}_+$
where, for $(x,k) \in O^{\rm IL}_{m} \times (0,\infty)$,
\begin{equation*}
	I_{\beta,F_{j},n,s}(x,k) = 
	\int_{\partial K_{m}}
	e^{ik \varphi_{m}(y)} \,
	H_{s,\beta}(k|x-y|) \,
	D^{n}_{k} \left[ \sigma_{m,\beta}^{\rm slow}(y,k) \right]
	F_{j}(x,y) \, ds(y).
\end{equation*}
Indeed, using \eqref{eq:Hankelasymptotics}-\eqref{eq:HankelTip}-\eqref{eq:HankelTail} and
$\eta_{m}^{\rm slow} = \sigma_{m,\beta}^{\rm slow} + \rho_{m,\beta}^{\rm slow}$,
we get that, for all $N,n,s,\beta  \in \mathbb{Z}_+$ with $\beta+1 \ge s$ and all $(x,k) \in S \times (k_0,\infty)$,
\begin{multline*}
	\left| I_{F_{j},n,s}(x,k) - I_{\beta,F_{j},n,s}(x,k) \right|
	\\
	\le
	\int_{\partial K_{m}}
	\left| H_{s,\beta}(k|x-y|) \right|
	\left| D^{n}_{k} \left[ \rho_{m,\beta}^{\rm slow}(y,k) \right] \right|
	\left| F_{j}(x,y) \right|
	ds(y)
	\\
	+ \int_{\partial K_{m}}
	\left| \overset{\sim}{H}_{s,\beta}(k|x-y|) \right|
	\left| D^{n}_{k} \left[ \eta_{m}^{\rm slow}(y,k) \right] \right|
	\left| F_{j}(x,y) \right|
	ds(y);
\end{multline*}
(\ref{eq:HankelTip}) and (\ref{eq:HankelTail}), in turn, imply
\begin{multline*}
	\left| I_{F_{j},n,s}(x,k) - I_{\beta,F_{j},n,s}(x,k) \right|
	\le C_{S,\beta,F_{j},s} \, (1+k)^{-\frac{1}{2}}
	\int_{\partial K_{m}}
	\left| D^{n}_{k} \left[ \rho_{m,\beta}^{\rm slow}(y,k) \right] \right|
	ds(y)
	\\
	+ C_{S,\beta,F_{j},s} \, (1+k)^{-(\beta + \frac{3}{2})}
	\int_{\partial K_{m}}
	\left| D^{n}_{k} \left[ \eta_{m}^{\rm slow}(y,k) \right] \right|
	ds(y);
\end{multline*}
using $\rho_{m,\beta}^{\rm slow} \in S^{-\frac{\beta}{3}}_{\frac{2}{3},\frac{1}{3}}$
and $\eta_{m}^{\rm slow} \in S^{0}_{\frac{2}{3},\frac{1}{3}}$, we therefore obtain
\begin{align*}
	\Big| I_{F_{j},n,s}(x,k) & - I_{\beta,F_{j},n,s}(x,k) \Big|
	\\
	& \le C_{S,\beta,F_{j},n,s} \left( (1+k)^{-\frac{1}{2}} (1+k)^{-\frac{\beta}{3}-\frac{2n}{3}}
	+ (1+k)^{-(\beta + \frac{3}{2})} (1+k)^{-\frac{2n}{3}}  \right)
	\\
	& \le C_{S,\beta,F_{j},n,s} \, (1+k)^{-(\frac{1}{2}+\frac{\beta}{3}+\frac{2n}{3})}
	\\
	& \le C_{S,N,\beta,F_{j},n,s} \, (1+k)^{-N}	
\end{align*}
for all $k > k_{0}$ provided $\beta \ge \max \{s+1, 3N-2n-1 \}$. By triangle inequality, this justifies the sufficiency of proving \eqref{eq:simplifyfurther}.

In light of \eqref{eq:HankelTip}, we see that statement \eqref{eq:simplifyfurther} will follow provided we prove,
 for all $\beta,n \in \mathbb{Z}_+$, that
\begin{equation}
\label{eq:simplifyfurther1}
	\left| I_{\beta,F_{j},n}(x,k) \right| \le C_{S,N,\beta,F_{j},n} \left( 1 + k \right)^{-N},
	\qquad
	(x,k) \in S \times (k_{0},\infty)
\end{equation}
holds for all $N \in \mathbb{Z}_+$
where, for $(x,k) \in O^{\rm IL}_{m} \times (0,\infty)$,
\begin{equation*}
	I_{\beta,F_{j},n}(x,k) = 
	\int_{\partial K_{m}}
	e^{ik (|x-y|+\varphi_{m}(y))} 
	D^{n}_{k} \left[ \sigma_{m,\beta}^{\rm slow}(y,k) \right]
	F_{j}(x,y) \, ds(y).
\end{equation*}

Since $\sigma_{m,\beta}^{\rm slow}$ is a finite sum of $a_{m,p,q,r,\ell}$, 
\begin{multline*}
	D_k^n \left[ a_{m,p,q,r,\ell}(s,k) \right]
	\\
	= \sum_{n_1 = n_0}^n  \binom{n}{n_1} k^{n_1-n-\frac{1+2p+3q+r+\ell}{3}+(\ell+1)_-} \, b_{m,p,q,r,\ell}(s) \,
	D_k^{n_1} \left[ (\Psi^{r,\ell})^{(p)}(k^{\frac{1}{3}}Z_m(s)) \right]	
\end{multline*}
(where $n_0$ is $n$ or $0$ depending respectively on the condition that $-\frac{1+2p+3q+r+\ell}{3}+(\ell+1)_-$ is $0$ or negative),
and the single variable Fa\`{a} di Bruno's formula for the derivatives of a composition~\cite{Johnson02} entails for $n_1 \in \mathbb{N}$
\begin{equation*}
	D_{\tau}^{n_1} \left[ (\Psi^{r,\ell})^{(p)}(k^{\frac{1}{3}}Z(\tau)) \right]
	= \!\!\!\!\! \sum_{\mu \, \cdot \, \xi(n_1) \, = \, n_1}
	k^{\frac{|\mu|}{3}} \,
	(\Psi^{r,\ell})^{(p+|\mu|)}(k^{\frac{1}{3}}Z(\tau))
	\prod_{j = 1}^{n_1} \dfrac{j}{\mu_{j}!}
	\left( \dfrac{D_\tau^j Z(\tau)}{j!} \right)^{\mu_{j}}
\end{equation*}
where $\xi(n_1) = (1,\ldots,n_1)$ and $\mu = (\mu_{1}, \ldots, \mu_{n_1})$ is any multi-index, 
statement \eqref{eq:simplifyfurther1} will follow provided we prove,
for all $p,r  \in \mathbb{Z}_+$ and $\ell \in - \mathbb{N}$, that
\begin{equation}
\label{eq:simplifyfurther2}
	\left| I_{F_{j},p,r,\ell}(x,k) \right| \le C_{S,N,F_{j},p,r,\ell} \left( 1 + k \right)^{-N},
	\qquad
	(x,k) \in S \times (k_{0},\infty)
\end{equation}
holds for all $N \in \mathbb{Z}_+$
where, for $(x,k) \in O^{\rm IL}_{m} \times (0,\infty)$,
\begin{equation*}
	I_{F_{j},p,r,\ell}(x,k) = 
	\int_{\partial K_{m}}
	e^{ik (|x-y|+ \varphi_{m}(y))} \,
	(\Psi^{r,\ell})^{(p)}(k^{\frac{1}{3}}Z(y)) \,
	F_{j}(x,y) \, ds(y).
\end{equation*}
For $j = 3$, \eqref{eq:simplifyfurther2} follows from
\begin{align*}
	\left| I_{F_{3},p,r,\ell}(x,k) \right| 
	& \le \int_{\partial K_{m}}
	\left| (\Psi^{r,\ell})^{(p)}(k^{\frac{1}{3}}Z(y)) \right|
	\left| F_{3}(x,y) \right| ds(y)
	\le C_{S,N,F_{3},p,r,\ell} \left( 1 + k \right)^{-N}
\end{align*}
where the last inequality is a consequence of the facts that $Z_m$ is negative on the shadow region $\partial K_m^{\rm SR}$,
the support of $F_3(x,\cdot)$ is a compact subset of $\partial K_m^{\rm SR}$ for all $x \in O_m^{\rm IL}$, and $\Psi^{r,\ell}(\tau)$
decreases rapidly in the sense of Schwartz as $\tau \to -\infty$. As for $j=2$, we note that the phase $\lambda_x(y) = |x-y| + \varphi_m(y)$
has no stationary point in the support of $\rho_2$, and therefore switching to parametric form, repeated integration by parts yields
(writing, with abuse of notation, $\tau$ for $y(\tau)$) for all $N_0 \in \mathbb{Z}_+$
\begin{align}
	I_{F_{2},p,r,\ell}(x,k)
	& = 
	\int_0^{P_m}
	e^{ik \, \lambda_x(\tau)} \,
	(\Psi^{r,\ell})^{(p)}(k^{\frac{1}{3}}Z(\tau)) \,
	F_{2}(x,\tau) \, d\tau
	\nonumber
	\\
	& = \left( \dfrac{i}{k} \right)^{N_0} 
	\int_0^{P_m}
	e^{ik \, \lambda_x(\tau)} \,
	g_{N_0}(x,\tau,k)  \,
	d\tau
	\label{eq:IF2}
\end{align}
where 
\[
	g_0(x,\tau,k)
	= (\Psi^{r,\ell})^{(p)}(k^{\frac{1}{3}}Z(\tau)) \,
	F_{2}(x,\tau)
	\qquad
	\text{and}
	\qquad
	g_{s+1}(x,\tau,k)
	= D_\tau \left[ \dfrac{g_s(x,\tau,k)}{D_\tau \left[ \lambda_x(\tau) \right]} \right]
.
\]
As can be inductively seen, we have
\begin{equation} \label{eq:gN0}
	g_{N_0}(x,\tau,k) = \sum_{n=0}^{N_0} f_n(x,\tau) \, D_\tau^n \left[ g_0(x,\tau,k) \right]
\end{equation}
for some smooth functions $f_n$ such that $\operatorname{supp} f_n(x,\cdot) \subset \operatorname{supp} \rho_2(\cdot)$ for all $x \in O^{\rm IL}_m$. 
By Leibniz's rule
\[
	D_{\tau}^{n} g_{0}(x,\tau,k) 
	= \sum_{n_1=0}^{n}
	\binom{n}{n_1}
	D_{\tau}^{n_1} \left[ (\Psi^{r,\ell})^{(p)}(k^{\frac{1}{3}}Z(\tau)) \right]
	D_{\tau}^{n-n_1} \left[ F_2(x,\tau) \right]
\]
so that applying Faa di Bruno's formula yields 
\begin{multline} \label{eq:derg0}
	D_{\tau}^{n} g_{0}(x,\tau,k) 
	= (\Psi^{r,\ell})^{(p)}(k^{\frac{1}{3}}Z(\tau)) \,
	D_{\tau}^{n} \left[ F_2(x,\tau) \right]
	+ \sum_{n_1=1}^{n}
	\binom{n}{n_1}
	\\
	\left\{ \sum_{\mu \, \cdot \, \xi(n_1) \, = \, n_1}
	k^{\frac{|\mu|}{3}} \,
	(\Psi^{r,\ell})^{(p+|\mu|)}(k^{\frac{1}{3}}Z(\tau))
	\prod_{j = 1}^{n_1} \dfrac{j}{\mu_{j}!}
	\left( \dfrac{D_\tau^j \left[ Z(\tau)  \right]}{j!} \right)^{\mu_{j}}
	\right\}
	D_{\tau}^{n-n_1} \left[ F_2(x,\tau) \right].
\end{multline}
Since $|\mu| \le n_1$ for all $\mu = (\mu_1,\ldots,\mu_{n_1})$ with $\mu \cdot \xi(n_1) = n_1$, and since by Lemma~\ref{lemma:Psilr} 
\begin{equation*}
	\left| (\Psi^{r,\ell})^{(p)} (\tau) \right|
	\le C_{p} \left( 1 + \left| \tau \right| \right)^{-p},
	\qquad
	p \in \mathbb{Z}_+,
\end{equation*}
\eqref{eq:derg0} implies
\begin{equation*}
	\left| D_{\tau}^{n} g_{0}(x,\tau,k) \right|
	\le C_{S,F_2,n} \, (1+k)^{\frac{n}{3}}.
\end{equation*}
Therefore \eqref{eq:gN0} yields
\[
	\left| g_{N_0}(x,\tau,k) \right|
	\le C_{S,F_2,N_0} \, (1+k)^{\frac{N_0}{3}}.
\]
Accordingly \eqref{eq:IF2} entails
\[
	\left| I_{F_{2},p,r,\ell}(x,k) \right|
	\le C_{S,F_2,N_0} \, (1+k)^{-\frac{2N_0}{3}}
	\le C_{S,N,F_2,N_0} \, (1+k)^{-N}
\]
provided $3N \le 2N_0$. This proves \eqref{eq:simplifyfurther2} for $j=2$. 

\noindent
{\bf Part 1d:} Here we finally prove \eqref{eq:rapiddecaynew}. To this end, we observe that
\begin{equation*}
	D^\zeta_x D^n_k \, \left[ k \, I_j(x,k) \right]
	= k \ D^\zeta_x D^n_k \, I_j(x,k)
	+ \left\{
		\begin{array}{cl}
		0, & n = 0,
		\\
		n \ D^\zeta_x D^{n-1}_k \, I_j(x,k), & n \ge 1.
		\end{array}
	\right.
\end{equation*}
Therefore \eqref{eq:rapiddecaynew} is immediate from \eqref{eq:rapiddecaynew1} since \eqref{eq:umslow}
and $I_2,I_3 \in S^{-\infty}_{1,0}(O^{\rm IL}_{m,3\varepsilon} \times (0,\infty))$ imply that
\begin{equation} \label{eq:shorten}
	u^{\rm slow}_m - \frac{ik}{4} I_1
	= \frac{ik}{4} \left( I_2 + I_3 \right) 
	\in S^{-\infty}_{1,0}(O^{\rm IL}_{m,3\varepsilon} \times (0,\infty)).
\end{equation}

\noindent
{\bf Part 2}:
For the compact set $\mathcal{S}$ we initially fixed, here we show for all $k_{0} \in (0,\infty)$ and $N \in \mathbb{Z}_+$
\begin{equation}
\label{eq:bean}
	\left| u^{\rm slow}_{m}(x,k) - \sum_{p=0}^{N} k^{-p} \, A_{m,p}(x) \right|
	\le C_{\mathcal{S},N} \left( 1 + k \right)^{\mu_{N}},
	\quad
	\left( x, k \right) \in \mathcal{S} \times \left( k_{0}, \infty \right),
\end{equation}
where $\mu_{N} = - ( N + \frac{1}{2})$. Arguing as before, we may assume that $k_{0}$ is large enough so that,
for all $k > k_{0}$, the decomposition \eqref{eq:Hankelasymptotics} holds for all $(x,y)$ in the compact set $\mathcal{S} \times \partial K_{m}$.
Due to \eqref{eq:shorten}, it is sufficient to show that
\begin{equation} \label{eq:I1minusasymp}
	\left| \frac{ik}{4}I_1(x,k) - \sum_{p=0}^N k^{-p} \, A_{m,p}(x) \right|
	\le C_{\mathcal{S},N} \, (1+k)^{\mu_N},
	\quad
	\left( x, k \right) \in \mathcal{S} \times \left( k_{0}, \infty \right).
\end{equation}
To this end, we employ \eqref{eq:Hankelasymptotics} to define for $N \in \mathbb{Z}_+$ and $(x,k) \in O^{\rm IL}_{m} \times (0,\infty)$
\[
	I_{1,N}(x,k)
	= e^{-ik\psi_{m}(x)}
	\int_{\partial K_{m}} \!\!\! \rho_{1}(y) \, H_{1,N}(k|x-y|) \, \frac{x-y}{|x-y|} \cdot \nu(y) \, e^{ik \varphi_{m}(y)} \,
	\eta^{\rm slow}_{m}(y,k) \, ds(y).
\]
$\eta^{\rm slow}_m(y,k)$ is bounded independently of $y$ and $k$ because it lies in
$S^{0}_{\frac{2}{3},\frac{1}{3}}(\partial K_{m} \times (0,\infty))$.
Therefore \eqref{eq:Hankelasymptotics}-\eqref{eq:HankelTip}-\eqref{eq:HankelTail} imply for all $N \in \mathbb{Z}_+$
\begin{equation} \label{eq:bean1}
	\left| \frac{ik}{4} I_{1}(x,k) - \frac{ik}{4} I_{1,N}(x,k) \right|
	\le C_{N,\mathcal{S}}
	\left( 1 + k \right)^{\mu_N},
	\quad
	(x,k) \in \mathcal{S} \times (k_0, \infty).
\end{equation}
Further, in light of the illuminated region asymptotic expansion \eqref{eq:MT85a}, we define for $N \in \mathbb{Z}_+$ and
$(x,k) \in O^{\rm IL}_{m} \times (0,\infty)$
\begin{equation} \label{eq:JNdef}
	J_{N}(x,k)
	= e^{-ik\psi_{m}(x)}
	\sum_{j=0}^{N} k^{-j} 
	\int_{\partial K_{m}} \rho_{1}(y) \, H_{1,N}(k|x-y|) \, e^{ik \varphi_{m}(y)}
	a_{m,j}(y) \, ds(y).
\end{equation}
By construction $\operatorname{supp}(\rho_1)$ depends only on $\mathcal{S}$ so that Theorem~\ref{Thm:main1}(i) entails
\[
	\left| \eta^{\rm slow}_m(y,k) - \sum_{j=0}^N k^{-j} \, a_{m,j}(y) \right|
	\le C_{\mathcal{S},N} \, (1+k)^{-(N+1)},
	\qquad
	(y,k) \in \operatorname{supp}(\rho_1) \times (k_0,\infty).
\]
From \eqref{eq:HankelTip}, we also have
\[
	\left| H_{1,N}(k|x-y|) \right| \le C_{\mathcal{S},N} (1+k)^{-\frac{1}{2}},
	\qquad
	(x,y,k) \in \mathcal{S} \times \partial K_m \times (k_0,\infty).
\]
Therefore, for all $N \in \mathbb{Z}_+$,
\begin{equation} \label{eq:bean2}
	\left| \frac{ik}{4} \, I_{1,N}(x,k) - \frac{ik}{4} J_{N}(x,k) \right| \le C_{N,\mathcal{S}} 
	\left( 1 + k \right)^{\mu_N} ,
	\quad
	(x,k) \in \mathcal{S} \times (k_0, \infty). 
\end{equation}
Using \eqref{eq:HankelTip} in \eqref{eq:JNdef}, we obtain
\[
	\frac{ik}{4} J_{N}(x,k)
	= e^{-ik\psi_{m}(x)}
	\sum_{j=0}^N \sum_{s=0}^N
	k^{\frac{1}{2}-j-s} 
	g_{m,j,s}(x,k)
\]
where
\[
	g_{m,j,s}(x,k)
	= \frac{i}{4} \, c_{1,s}
	\int_{\partial K_{m}} e^{ik \, (\varphi_{m}(y)+|x-y|)} \left[ \rho_{1}(y) \, \frac{x-y}{|x-y|} \cdot \nu(y) \, \frac{a_{m,j}(y)}{|x-y|^{s+\frac{1}{2}}} \right] ds(y).
\]
Recall that, for any $x \in \mathcal{S}$, the only stationary point of the phase $\varphi_{m}(y)+|x-y|)$ in $\operatorname{supp}(\rho_1)$
is $y(x)$, $\varphi_{m}(y(x))+|x-y(x)|) = \psi(x)$, and $\rho_1(y(x)) = 1$ by construction. Using the definition \eqref{eq:fmjs2q} of $f_{m,j,s_2,q}$,
the stationary phase lemma therefore entails for $0 \le j,s \le N$ and $(x,k) \in \mathcal{S} \times (k_0,\infty)$
\begin{align*}
	\Big| g_{m,j,s} & (x,k) - e^{ik \, \psi(x)} \sum_{q=0}^N k^{-(q+\frac{1}{2})} f_{m,j,s,q}(x) \Big|
	\\
	& \le C_N \, (1+k)^{-(N+1)} \,
	\left\Vert \rho_1(y(t)) \, \frac{x-y(t)}{|x-y(t)|} \cdot \nu(y(t)) \, \frac{a_{m,j}(y(t))}{|x-y(t)|^{s_2+\frac{1}{2}}} \right\Vert_{C^{N+2}[0,P_m]}
	\\
	& \le C_{\mathcal{S},N} \, (1+k)^{-(N+1)}.
\end{align*}
This implies that if
\[
	\frac{ik}{4} \tilde{J}_{N}(x,k)
	= \sum_{j=0}^N \sum_{s=0}^N \sum_{q=0}^N
	k^{-j-s-q} 
	f_{m,j,s,q}(x),
\]
then
\begin{equation} \label{eq:bean3}
	\left| \frac{ik}{4} J_{N}(x,k) - \frac{ik}{4} \tilde{J}_{N}(x,k) \right|
	\le C_{\mathcal{S},N} \, (1+k)^{-(N+1)},
	\qquad
	(x,k) \in \mathcal{S} \times (k_0,\infty).
\end{equation}
Clearly, we also have
\begin{equation} \label{eq:bean4}
	\left| \frac{ik}{4} \tilde{J}_{N}(x,k) - \sum_{p=0}^N k^{-p} \, A_{m,p}(x) \right|
	\le C_{\mathcal{S},N} \, (1+k)^{-(N+1)},
	\qquad
	(x,k) \in \mathcal{S} \times (k_0,\infty).
\end{equation}
Therefore \eqref{eq:I1minusasymp} follows from \eqref{eq:bean1}, \eqref{eq:bean2}, \eqref{eq:bean3}, and \eqref{eq:bean4}.

\noindent
{\bf Part 3:} Since $A_{m,p}(x) \in C^{\infty}(O^{\rm IL}_m)$, we have $k^{-p} A_{m,p}(x) \in S^{-p}_{1,0}(O^{\rm IL}_m \times (0,\infty))$.
In light of Parts 1 and 2, the fundamental asymptotic expansion lemma therefore implies that
$u^{\rm slow}_{m}(x,k) \in S^{0}_{1,0} (O^{\rm IL}_{m} \times (0,\infty))$
and $u^{\rm slow}_{m}(x,k) \sim \sum_{p=0}^{\infty} k^{-p} \, A_{p}(x)$, and this completes the proof.

\section{Numerical validation}
\label{sect:5}

\begin{figure}[ptb]
\begin{center}
	\includegraphics*[width=3.0in,viewport=35 25 510 390,clip] {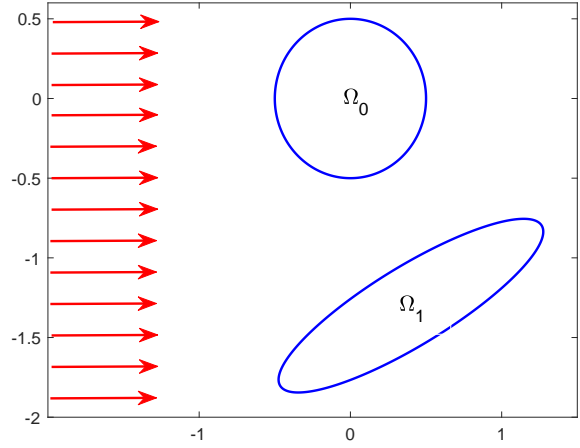}
\end{center}
\caption{A multiple scattering configuration comprised of a circle $\Omega_0$ and an ellipse
$\Omega_1$ illuminated by a plane wave incidence coming in from the left.
}
\label{Fig:Configuration}
\end{figure}

\begin{figure}[ptb]
\begin{center}
	\includegraphics*[width=5.1in,viewport=35 75 595 800,clip] {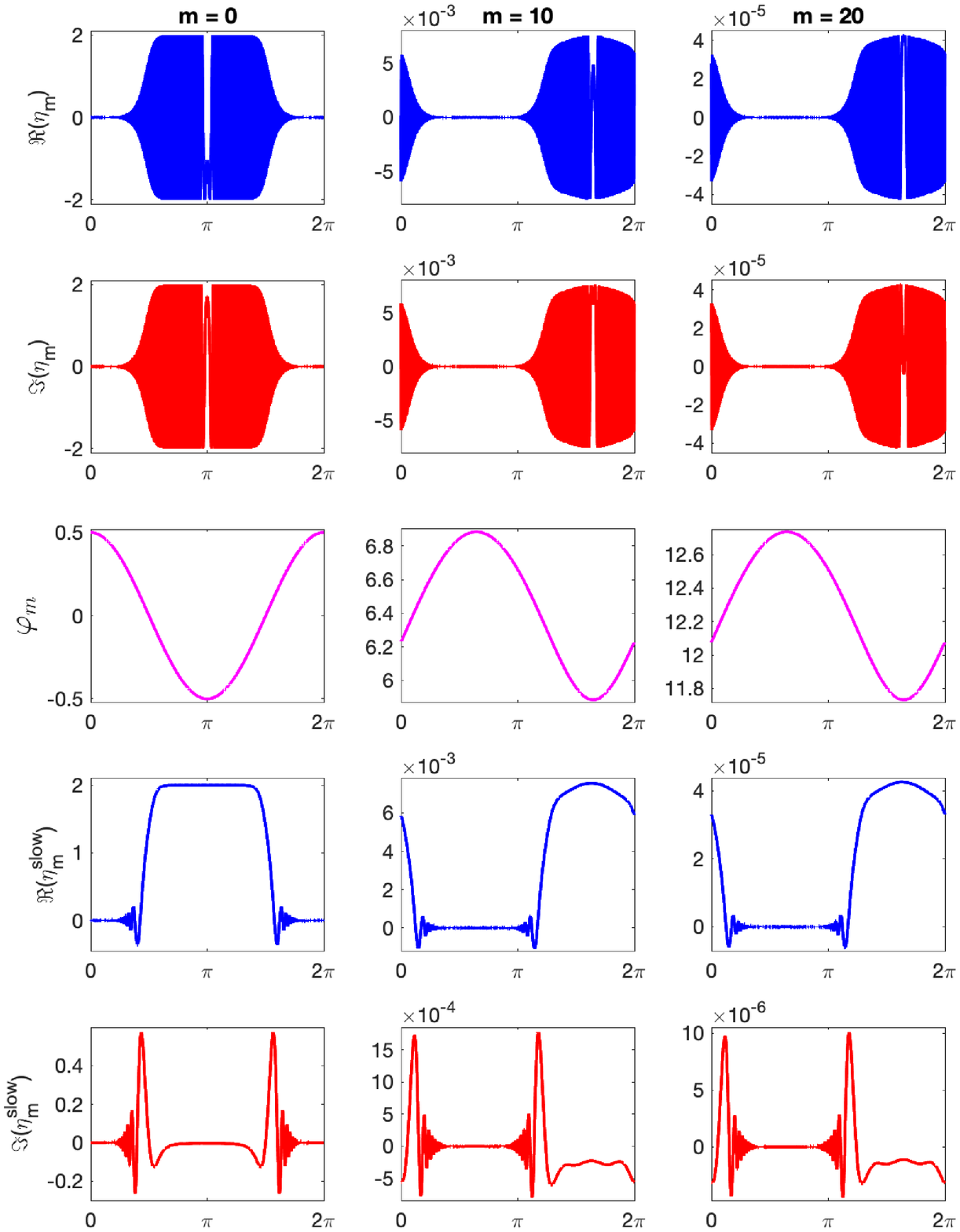}
\end{center}
\caption{The real $\Re(\eta_m)$ and imaginary $\Im(\eta_m)$ parts of $\eta_m$
(first and second rows), the phase function $\phi_m$ (third row), and the real $\Re(\eta_m^{\rm slow})$
and imaginary $\Im(\eta_m^{\rm slow})$ parts of $\eta_m^{\rm slow}$ (fourth and fifth rows)
for the wavenumber $k = 800$ and reflections $m=0$ (left pane), $m=10$ (middle pane),
and $m=20$ (right pane).
}
\label{Fig:Circles}
\end{figure}

\begin{figure}[ptb]
\begin{center}
	\includegraphics*[width=5.1in,viewport=25 75 595 803,clip] {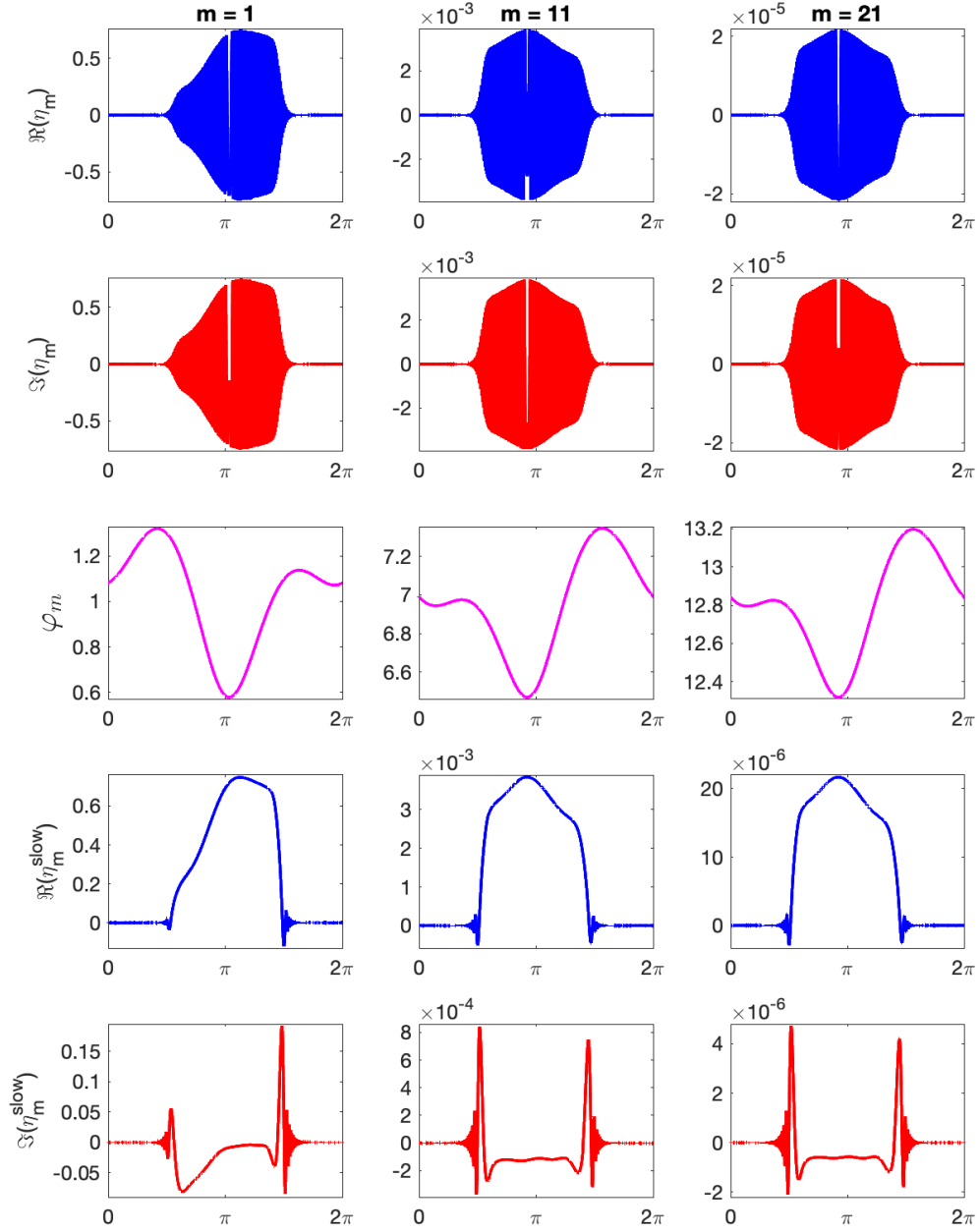}
\end{center}
\caption{The real $\Re(\eta_m)$ and imaginary $\Im(\eta_m)$ parts of $\eta_m$
(first and second rows), the phase function $\phi_m$ (third row), and the real $\Re(\eta_m^{\rm slow})$
and imaginary $\Im(\eta_m^{\rm slow})$ parts of $\eta_m^{\rm slow}$ (fourth and fifth rows)
for the wavenumber $k = 800$ and reflections $m=1$ (left pane), $m=11$ (middle pane),
and $m=21$ (right pane).
}
\label{Fig:Ellipses}
\end{figure}

To validate Theorem~\ref{Thm:main1} through numerical simulations, we consider a multiple
scattering configuration consisting of two smooth strictly convex scatterers
illuminated by a plane wave incidence coming in from the left ($\alpha = (1,0)$) as depicted in
Fig.~\ref{Fig:Configuration}. The first obstacle there, denoted as $\Omega_0$, is a circle of radius
$\frac{1}{2}$ centered at the origin and is taken with the parameterization
$x(t) = \frac{1}{2} \left( \cos t, \sin t \right)$ ($0 \le t < 2\pi$). The second scatterer, denoted as
$\Omega_1$, is an ellipse $x(t) = \left( \frac{1}{4} \cos t, \sin t \right)$ ($0 \le t < 2\pi$) rotated
by $60^\circ$ in the clockwise direction and translated by the vector $\frac{1}{10} \left( 4,-13 \right)$.
To illustrate Theorem~\ref{Thm:main1}, we consider the iterative solution of
integral equations \eqref{eq:inteqall} for the wavenumber $k = 800$ on the sequence of obstacles
$\{ K_m \}_{m \ge 0}$ where $K_{2m} = \Omega_0$ and $K_{2m+1} = \Omega_1$
so that $\eta_{m}$ is the total field generated on the circle $\Omega_0$
(respectively the ellipse $\Omega_1$) at the $m$-th iteration when $m$
is even (respectively odd).

Concerning the circle $\Omega_0$, in Fig.~\ref{Fig:Circles}, we display
the graphs of the real and imaginary parts of $\eta_m$, the phase function
$\phi_m$, and the real and imaginary parts of $\eta_m^{\rm slow}$
(cf. \eqref{eq:extractphases})
at iterations $m = 0$, $m = 10$, and $m = 20$. Similarly, in Fig.~\ref{Fig:Ellipses}
we display the same data corresponding to the ellipse $\Omega_1$ at
iterations $m = 1$, $m = 11$, and $m = 21$. 

The simulations depicted in Figures~\ref{Fig:Circles} and ~\ref{Fig:Ellipses} are
in agreement with the asymptotic expansions of the envelops $\eta_m^{\rm slow}$
presented in Theorem~\ref{Thm:main1} as they support that $\eta_m^{\rm slow}$
admits a classical asymptotic expansion (cf. Theorem~\ref{Thm:main1}(i)) in the illuminated region
$\partial K_m^{\rm IL}$ \eqref{eq:illum_m_def}. This behavior transforms through a change
in its asymptotic behavior (cf. Theorem~\ref{Thm:main1}(ii)) across the shadow boundaries
$\partial K_m^{\rm SB}$ \eqref{eq:shbound_m_def} to rapid decrease
(cf. Theorem~\ref{Thm:main1}(iii)---but with additional oscillations not captured by the
phase extraction) as one moves deep into the shadow region $\partial K_m^{\rm SR}$
\eqref{eq:shreg_m_def}.

\section{Conclusions}
\label{sec:conclude}
In this paper, we derived the asymptotic expansions of the 
solutions of multiple scattering problem with the Neumann boundary condition.
These expansions allowed for the derivation of sharp
wavenumber dependent estimates related to their derivatives,
more generally on the derivatives of envelopes obtained by
subtracting finitely many terms in their asymptotic expansions
as presented in Theorem~\ref{thm:etamder}.
These estimates, in turn, can be used to extend the Galerkin
boundary element methods for sound hard single scattering
problems to multiple scattering scenarios
for frequency independent implementations.


\bibliographystyle{plain}      
\bibliography{BoubendirEcevit2022.bib}   
\end{document}